\documentclass[letterpaper, 10 pt, conference]{ieeeconf}
\usepackage{defs}
\begin{document}
\title{ \LARGE \bf A Moreau Envelope Approach for LQR Meta-Policy Estimation}
\author{Ashwin Aravind, Mohammad Taha Toghani, and C\'esar A. Uribe
\thanks{AA is with the Centre for Systems and Control, IIT Bombay, Mumbai, MH, India. \texttt{a.aravind@iitb.ac.in}. MTT and CAU are with the Department of Electrical and Computer Engineering, Rice University, Houston, TX, USA.  \texttt{\{mttoghani, cauribe\}@rice.edu}.} 
\thanks{AA thanks Mehta Rice Engineering Scholars Program and IoE - IIT Bombay for the support during this work. AA is supported by the Ministry of Human Resource Development, Govt. of India. MTT's research is supported by the Lodieska Stockbridge Vaughn Fellowship. This work is supported by the National Science Foundation under Grants \#2211815 and \#2213568 and the Google Scholar Research Award.}
}
\maketitle
\begin{abstract}
We study the problem of policy estimation for the Linear Quadratic Regulator (LQR) in discrete-time linear time-invariant uncertain dynamical systems. We propose a Moreau Envelope-based surrogate LQR cost, built from a finite set of realizations of the uncertain system, to define a meta-policy efficiently adjustable to new realizations. Moreover, we design an algorithm to find an approximate first-order stationary point of the meta-LQR cost function. Numerical results show that the proposed approach outperforms naive averaging of controllers on new realizations of the linear system. We also provide empirical evidence that our method has better sample complexity than Model-Agnostic Meta-Learning (MAML) approaches. 
\end{abstract}
\section{Introduction}

Complexity and uncertainty are inherent to the control of modern engineering systems and their applications. Interactions with other socio-technical systems or unpredictability in the operating environment hinder our ability to obtain accurate models as the effects of such complex interactions might become apparent only during task execution~\cite{bertsekas2012dynamic, khalil1996robust}. 

Reinforcement Learning (RL) has emerged as a powerful, data-driven approach in designing controllers, particularly in scenarios where the system model is not fully known or is too complex to be captured by traditional methods~\cite{MF-RG-SK-MM:18, BG-PME-TS:20, BP-ZPJ:21, GJ-HB-JG-AC-PKS:21, HW-LFT-AM-JA:23,giegrich2024convergence,sforni2024stability}. Such methods involve RL techniques to iteratively learn optimal control policies through interaction with the system, bypassing the need for an explicit model.

The seminal work in \cite{MF-RG-SK-MM:18} introduces a data-driven linear quadratic regulator 
(LQR) design using RL frameworks and demonstrates global convergence to an optimal control policy without requiring a model for the system. However, the uncertainties in the system while executing the control policy can significantly affect the stability and operating costs. Thus, developing resilient strategies that quickly adapt to new realizations of the uncertain system is crucial when designing control policies.

Meta-learning facilitates quick adaptation to new tasks by applying previously acquired knowledge to new challenges~\cite{CF-PA-SL:17,fallah2020provably}. In RL, this evolves into Meta-RL, which trains agents to efficiently adapt to unknown environments~\cite{clavera2018model, AN-IC-SL-RSF-PA-SL-CF:19, fallah2021convergence,toghani2022pars,JB-RV-EZ-ZX-LZ-CF-SW:24}. The Meta-RL framework aims to develop a meta-policy that allows for rapid learning across various tasks, fostering knowledge adaptation to new scenarios.

Model-Agnostic Meta-Learning (MAML), notable for its versatility, focuses on optimizing a model's initial parameters for swift adaptation through minimal adjustments, setting them to be highly responsive to a few policy gradient updates for quick task-specific learning~\cite{CF-PA-SL:17,fallah2021convergence}. On the other hand, the Moreau Envelope (ME) approach \cite{MTT-SL-CAU:23,toghani2023first,CTD-NT-JN:20} introduces a surrogate cost with a regularization term to smooth the optimization process, enhancing the stability of gradient updates and convergence efficiency.

The meta-learning problem for LQR policy design was studied in~\cite{musavi2023convergence,LFT-DZ-JA-HW:24,molybog2021does} and focuses on meta-learning based on the MAML framework. MAML for the LQR problem in a single system and multi-task setting was studied in~\cite{molybog2021does}, which was later extended to a multi-system and multi-task setting by \cite{musavi2023convergence}. In their setup, a finite set of observed tasks is used to design an LQR meta-policy that can effectively and quickly adapt to unobserved tasks, but only local convergence is shown. Later, the authors in~\cite{LFT-DZ-JA-HW:24} studied system heterogeneity and provided global convergence guarantees.

When applying policy gradient techniques, MAML and ME define different surrogate cost functions and access different computational oracles. MAML uses the estimation of the gradient and Hessian of the cost function, whereas ME uses an approximate solution of an inner optimization problem via a first-order oracle. The inner loop accuracy parameter appears in the convergence guarantee, thereby providing the user with explicit control over the quality of the solution. Complied with this theoretical intuition, recent studies have shown that ME allows for a better empirical performance than MAML in multi-task setups \cite{MTT-SL-CAU:23,CTD-NT-JN:20,toghani2023first}.

This work investigates uncertain linear control systems within the LQR framework, applying meta-learning to handle uncertainties. We introduce a meta-policy estimation method using an ME-based framework \cite{toghani2023first} tailored for the LQR in both model-free and model-based settings to facilitate flexible policy initialization for rapid adaptation to new realizations of the uncertain system. Our setup is motivated by the challenges a control system may encounter due to uncertainties rather than having to perform multiple LQR tasks. However, both these scenarios result in a similar underlying framework where the cost incurred by any given policy varies due to changing realizations of the uncertainties or a change in the LQR task. Hence, it is possible to use the approaches presented for both settings interchangeably.

\textit{The main contribution of this paper is a novel first-order meta-RL algorithm (MEMLQR)} that computes LQR meta-policies that can be efficiently adapted for an unseen system realization. By integrating meta-learning principles, our approach improves policy adaptability, ensuring effective performance in various scenarios and quick convergence.

The contributions of this paper are summarized as follows:

\begin{enumerate}
\item We define an augmented cost function where the linear quadratic cost is regularized with Moreau Envelopes to induce personalization amenable to model-based and model-free policy gradient methods for policy design. 
\item We propose a first-order iterative algorithm to optimize the defined augmented cost in a client-server setup.
\item We show the convergence of the proposed algorithm to an approximate first-order stationary point. We also show that the policies generated by the algorithm will incur a finite cost for all the available system realizations.
\item We present a set of numerical results that validate the algorithm's efficiency in minimizing adaptation costs at the testing phase, highlighting the practical benefits of the proposed method. Moreover, we show the proposed method outperforms other approaches based on MAML personalization.
\end{enumerate}

The rest of this article is organized as follows. In Section~\ref{sec:problem_statement}, we formally introduce the problem we are addressing along with the relevant background and propose a solution. In Section~\ref{sec:analysis}, we present the assumptions along with results that indicate the convergence of the proposed algorithm. Section~\ref{sec:simulations} contains numerical examples to indicate the results provided in Section~\ref{sec:analysis}. We conclude our article in Section~\ref{sec:conclusions} and provide the scope for future work.

\noindent \textbf{Notation:} By $\R{}{}$, we denote the set of real numbers, and by $\Zp$, we represent the set of positive integers. For two real numbers $a$ and $b$ such that $b>a$, by $\emph{unif}\pbrack{a,b}$, we define a uniform distribution over the interval $\lcrc{a}{b}$. For any square matrix $M$, $M\succeq 0$ denotes that $M$ is positive semi-definite, and $M\succ 0$ means positive definite. $M$ is Schur if all its eigenvalues lie in the open unit disc.

\section{Problem Formulation and Algorithm}
\label{sec:problem_statement}
In this section, we first introduce the problem setup of finding the optimal policy for a linear system, i.e., single realization. Then, we discuss our approach to finding a meta-policy via personalized costs. Finally, we present our algorithm that minimizes the augmented cost.  

\subsection{Preliminaries}

Consider the family of discrete-time linear time-invariant uncertain dynamical systems 
\begin{equation}
\label{eq:exp_linear}
\dv{}{\iter+1} =  \ematA{} \dv{}{\iter} +  \ematB{} \inpt{}{\iter},  \ \ t=0,1,2,\cdots,
\end{equation}
%
where, $\ematA{}:= \ematA{0}+\sum_{\inode=1}^{\tunA}\unA{\inode}\ematA{\inode}, \ematB{}:= \ematB{0}+\sum_{\jnode=1}^{\tunB} \unB{\jnode}\ematB{\jnode}$, $\dv{}{\iter}\in\R{\dvdim}{}$ is the state and $\inpt{}{\iter}\in\R{\ipdim}{}$ is the input to the system at time $\iter$, $\unA{}:=\pbrack{\unA{1}\;\unA{2}\;\cdots\;\unA{\tunA}}\in\R{\tunA}{}$ and $\unB{}:=\pbrack{\unB{1}\;\unB{2}\;\cdots\;\unB{\tunB}}\in\R{\tunB}{}$ are bounded uncertainties that determine the system dynamics along with matrices $\ematA{0}\in\R{\dvdim\times\dvdim}{}$ and $\ematB{0}\in\R{\dvdim\times\ipdim}{}$, and sets of matrices, $\matAf{}:=\pbrack{\ematA{1},\ematA{2},\cdots,\ematA{\tunA}} \in \pbrack{\R{\dvdim\times\dvdim}{}}^\tunA$ and $\matBf{}:=\pbrack{\ematB{1},\ematB{2},\cdots,\ematB{\tunB}}\in\pbrack{\R{\dvdim\times\ipdim}{}}^\tunB$. It is assumed that all possible realizations of the $\ematA{}-\ematB{}$ pair are controllable. Note that we adopt this model to be true to the earlier literature for uncertain linear systems~\cite{IRP-CVH:86}. Still, we emphasize the fact that a simpler model with uncertain system matrices (such as a norm bound around given matrices) may also be considered.

Moreover, consider a set $\nodes$ of $\tnodes = |\nodes|$ realizations of~\eqref{eq:exp_linear},
\begin{equation}
\label{eq:dynamics}
\dv{\inode}{\iter+1} = \matA{\inode} \dv{\inode}{\iter} + \matB{\inode} \inpt{\inode}{\iter},   \ \ i=1,2,\cdots,V, 
\end{equation}
where $\dv{\inode}{\iter}\in\R{\dvdim}{}$ is the state and $\inpt{\inode}{\iter}\in\R{\ipdim}{}$ is the input for the $\inode^{\thh}$ system at time $\iter$, and $\matA{\inode}\in\R{\dvdim\times\dvdim}{}$ and $\matB{\inode}\in\R{\dvdim\times\ipdim}{}$ are the system matrices, and the initial state $x_i^0$ is randomly drawn from distribution $\mathcal{D}_i$. 

The standard LQR problem for each realization $i\in \nodes$ is
\begin{equation}
\label{eq:cost}
\min_{\{\inpt{\inode}{\iter}\}_{\iter=0}^{\infty}} \mathbb{E}_{\dv{\inode}{0}\sim \mathcal{D}_i} \left[ \sum_{\iter=0}^{\infty}{\dv{\inode}{\iter}}^{\top}\matQ\dv{\inode}{\iter} + {\inpt{\inode}{\iter}}^{\top} \matR{} \inpt{\inode}{\iter} \right],
\end{equation}
subject to~\eqref{eq:dynamics}, where $\matQ\in\R{\dvdim\times\dvdim}{}$ and $\matR{}\in\R{\ipdim\times\ipdim}{}$ are the cost matrices such that $\matQ\succeq 0$ and $\matR{}\succ 0$. Without loss of generality, it is assumed that the cost matrices $\matQ$ and $\matR{}$ are the same for all the systems. This models a scenario where the controllers have the same goal or are trying to accomplish a similar task. The optimal control policy for the above cost function is a linear feedback policy of form $\inpt{\inode}{\iter}=-\mgain{\inode}{}\dv{\inode}{\iter}$ for a policy $\mgain{\inode}{}\in\R{\ipdim\times\dvdim}{}$~\cite[Chapter 3]{bertsekas2012dynamic}. Therefore, the cost can be expressed as
\begin{equation}
\label{eq:cost2}
\kcost{\inode}{\mgain{\inode}{}} := \mathbb{E}_{\dv{\inode}{0}\sim \mathcal{D}_i} \left[ \sum_{\iter=0}^{\infty}{\dv{\inode}{\iter}}^{\top}\big(\matQ + {\mgain{\inode}{}}^{\top}R {\mgain{\inode}{}}\big)\dv{\inode}{\iter} \right].
\end{equation}

Note that, for $\kcost{\inode}{\mgain{\inode}{}}<\infty$ (i.e., $\mgain{\inode}{}$ is stable), we require that $\matA{\inode}+\matB{\inode}\mgain{\inode}{}$ is Schur. 

The seminal work in~\cite{MF-RG-SK-MM:18} established that the cost function $\kcost{\inode}{\mgain{\inode}{}}$ is non-convex with respect to $\mgain{\inode}{}$. However, its gradient domination and local smoothness properties make policy gradient iteration methods converge globally in model-based and model-free settings under the assumption that the initial state distribution is such that the covariance matrix given by $  \EE{\dv{\inode}{0}\sim \mathcal{D}_i} {\dv{\inode}{0}{\dv{\inode}{0}}^{\top}}$ is full rank. 

The gradient descent iterates for a system $\inode \in \nodes$ for finding the optimal policy is stated below:
\begin{equation}
\label{eq:deslqr}
\fgain{\oc+1} = \fgain{\oc} - \olr \gradkcost{\inode}{\fgain{\oc}},
\end{equation}
where, $\fgain{\oc}\in\R{\ipdim\times\dvdim}{}$ is the feedback at $\oc^{\text{th}}$ iteration of the descent, $\olr$ is the step-size and the gradient is given by
\begin{equation}
\label{eq:gradlqr}
\gradkcost{\inode}{\fgain{\oc}} {=} 2\bbrack{\bbrack{\matR{}{+}{\matB{\inode}}^{\top}\matP{\fgain{\oc}}\matB{\inode}}\fgain{\oc}{-}{\matB{\inode}}^{\top}\matP{\fgain{\oc}}\matA{\inode}}\matS{\fgain{\oc}},
\end{equation}
with,
\begin{align}
\matP{\fgain{\oc}} &{=} \matQ {+} {\fgain{\oc}}^{\top}\matR{}\fgain{\oc} {+} \bbrack{\matA{\inode}{-}\matB{\inode}\fgain{\oc}}^{\top}\matP{\fgain{\oc}}\bbrack{\matA{\inode}{-}\matB{\inode}\fgain{\oc}},\\
\matS{\fgain{\oc}} &{=} \mathbb{E}_{\dv{\inode}{0}\sim \mathcal{D}_i} \left[ \sum_{\iter=0}^{\infty} \dv{\inode}{\iter}{\dv{\inode}{\iter}}^{\top} \right].
\end{align}
The iterates in~\eqref{eq:deslqr} need to be initialized at a stable policy $\fgain{0}$ (i.e., $\kcost{\inode}{\fgain{0}} <\infty$). For the case where the model is available, it is easy to calculate the gradient via~\eqref{eq:gradlqr}. However, in the case of a model-free setup, the system and task parameters are unknown. Therefore, the gradient may be estimated by a zeroth-order algorithm~\cite[Algorithm 1]{MF-RG-SK-MM:18}.

\subsection{Moreau Envelope-based LQR Meta-Policy Estimation}

While the approach presented by the policy gradient iterations in~\eqref{eq:deslqr} is efficient for the computation of approximate optimal policies, here we focus on the task of learning good initialization that can be efficiently adapted for the LQR problem on unseen realizations of~\eqref{eq:dynamics}. We propose to use the Moreau Envelope (ME) approach for meta-learning~\cite{CTD-NT-JN:20,toghani2023first,MTT-SL-CAU:23} that defines an optimization problem
\begingroup
\allowdisplaybreaks
\begin{subequations}
\begin{align}
\label{eq:melqr}
&\min_{ \gain{} \in \cap \stable{\inode}}\mcost{\mgain{}} := \sum_{\inode\in\nodes} \mcost{\inode}{\mgain{}},\\
 \text{with} \ \  & \mcost{\inode}{\mgain{}} := \min_{\mgain{\inode}{} \in  \stable{\inode}} \kcost{\inode}{\mgain{\inode}{}} + \frac{\reg}{2} \normb{\mgain{\inode}{}-\mgain{}}^2,  \label{eq:imelqr}
\end{align}
\end{subequations}
\endgroup
where $\lambda \in (0,\infty)$ is a regularization parameter.

\begin{remark}
For any $\inode\in\nodes$, the minimizer $\mgain{\inode}{*}$ for~\eqref{eq:imelqr} will be such that $\matA{\inode}+\matB{\inode}\mgain{\inode}{*}$ is Schur. We define the set of stabilizing policies for any $\inode\in\nodes$ as $\stable{\inode}:=\setdeff{\gain{}\in \R{\ipdim\times\dvdim}{}}{\matA{\inode}+\matB{\inode}\gain{}\text{ is Schur}}$.
\end{remark}

Setting $\lambda=0$ in ME equates the algorithm to local RL, concentrating exclusively on optimizing the current task without leveraging insights from other tasks. Conversely, as $\lambda$ approaches infinity, the algorithm defaults to a naive averaging of costs, disregarding the unique characteristics of each task and resulting in a broadly applicable but non-personalized policy. This is obtained by augmenting the cost defined in~\eqref{eq:cost} with a regularizer term and summing this augmented cost over all the systems in the set $\nodes$. We define a cost based on the Moreau Envelopes inspired by the personalized federated learning setup~\cite{CTD-NT-JN:20, MTT-SL-CAU:23}. 
The meta-policy is given by, $\gain{*} = \argmin_{\gain{} \in \cap \stable{\inode}} \mcost{\mgain{}}$. The minimization of this cost $\mcost{\mgain{}}$ is a bilevel optimization problem where the solutions of both outer and inner minimization problems implicitly depend on each other. Here, the inner problem minimizes the cost induced when the LQR cost is regularized with a proximity term. 

We propose a gradient-based iterative framework termed \emph{MEMLQR}: \underline{M}oreau \underline{E}nvelope based \underline{M}eta \underline{L}inear \underline{Q}uadratic \underline{R}egulator, shown in Algorithm~\ref{algo:dis_com}, to solve the bilevel optimization problem~\eqref{eq:melqr}, for which the gradient is 
%
%
\begingroup
\allowdisplaybreaks
\begin{subequations}
\begin{align}
\label{eq:gmelqr}
&\hspace{17mm}\gradmcost{\mgain{}} = \sum_{\inode\in\nodes} \gradmcost{\inode}{\mgain{}},\\
\label{eq:gimelqr}
&\text{such that, }\gradmcost{\inode}{\mgain{}} =  \lambda\big(\mgain{}- \dmcgain_{\inode} \bbrack{\mgain{}} \big),  \\
\label{eq:inner}
&\text{with} \ \ \dmcgain_{\inode}\bbrack{\mgain{}} = \argmin_{\Breve{\mgain{}}\in \stable{i}}  \kcost{\inode} { \Breve{\mgain{}} } + \frac{\reg}{2} \| \Breve{\mgain{}}-\mgain{}\|^2.
\end{align}
\end{subequations}
\endgroup

\begin{remark}
The computation of~\eqref{eq:gimelqr} requires solving~\eqref{eq:inner}. The gradient for the augmented inner cost function is given by $\gradkcost{\inode}{\Breve{\mgain{}}}
+ \reg \big(\Breve{\mgain{}}-\mgain{} \big)$, 
where, $\gradkcost{\inode}{\Breve{\mgain{}}}$ can be obtained using~\eqref{eq:gradlqr} or the zeroth-order approach~\cite[Algorithm 1]{MF-RG-SK-MM:18}, and $\mgain{}$ is the current estimate of the meta feedback policy. Thus, this approach is equally amenable to model-based or model-free scenarios.
\end{remark}

 As the cost induced by an estimate of the meta-policy at different systems is independent of each other, a client-server model can be used for computation as individual clients need not share their data. Once the inner optimizer is obtained, a gradient descent step is performed locally for the outer problem, and a local estimate for the meta-policy is obtained. Once all the systems have an estimate of the meta-policy, these estimates are communicated to the server. The meta-policy at the server is updated using estimates from different systems and the previous estimate from the server. This iterative gradient descent is performed until the number of outer iterations guarantees the required estimate accuracy.
\begin{algorithm}[!ht]
\caption{\emph{MEMLQR}: \underline{M}oreau \underline{E}nvelope based \underline{M}eta \underline{L}inear \underline{Q}uadratic \underline{R}egulator}
\label{algo:dis_com}
\SetAlgoLined
\DontPrintSemicolon
\SetKwInOut{ini}{Initialize}
\SetKwInOut{giv}{Data}
\giv{Number of outer iterations $\ochorizon$, number of inner iterations $\tinner$, inner step-size $\olr$, outer step-size $\mlr$, accuracy threshold $\biopt$, initial value of the policy $\gain{0}$.}
\BlankLine
\For{$\oc=0,1,\cdots,\ochorizon-1$}{
Communicate the policy at the server, $\mgain{\inode}{\oc,0} \gets \gain{\oc}, \text{ for all }\inode\in\nodes$\;
\For{$\inode\in\nodes$}{
\For{$\ic=0,1,2,\cdots,\tinner-1$}{
Compute $\bar{K}$ s.t.  \; $\|\dmcgain_{\inode}\bbrack{\mgain{\inode}{\oc,\ic}}-\bar{K}\|\leq\biopt$, c.f.~\eqref{eq:inner}  \;
$\mgain{\inode}{\oc,\ic+1}\gets\mgain{\inode}{\oc,\ic}-\olr \reg \pbrack{\mgain{\inode}{\oc,\ic}-\bar{K}}$
}
 $\gain{\oc+1}\gets\pbrack{1-\mlr}\gain{\oc}+\frac{\mlr}{\tnodes}\sum_{\inode\in\nodes}\mgain{\inode}{\oc,\tinner}$
}}
\Return $\gain{\ochorizon}$
\end{algorithm}

\section{Convergence analysis}
\label{sec:analysis}

This section formally states the associated assumptions and auxiliary results for the convergence analysis for Algorithm~\ref{algo:dis_com}. Due to space considerations, the proofs have been moved to~\cite{AA-MTT-CAU:24}. We show in our main result Theorem~\ref{thm:convergence} that, after a threshold number of iterations, the algorithm converges to a neighbourhood of a first-order stationary point of the Moreau regularized cost $\mcost{\cdot}$. This neighbourhood is determined by the number of iterations, the regularizer, the system and cost matrices, and an error incurred in the inner loop calculations. The policy gradient method we use as a baseline for designing our algorithm finds the gradient at the current iterate of the feedback policy by simulating the system under that policy from random initial states. The assumption presented below ensures that all the states are visited with a non-zero probability while exploring.
\begin{assumption}
\longthmtitle{Persistence of excitation-like}
\label{assump:persistence}
Let $\dv{\inode}{0}$ be the initial state of realization $i\in\mathcal{V}$, then $ \mathbb{E}_{\dv{\inode}{0}\sim \mathcal{D}_i} \left[ \dv{\inode}{0}{\dv{\inode}{0}}^{\top} \right] $ is full rank.
\end{assumption}

The above assumption is well-known and standard in the control literature~\cite{MF-RG-SK-MM:18, BG-PME-TS:20}. The system should be stable at the initial guess for any policy gradient algorithm to work. The assumption below ensures that none of the system's available realizations become unstable when we initialize the algorithm using a random initial gain. Such instability will cause the gradient to be unavailable at those realizations. 
\begin{assumption}
\longthmtitle{Bound on initial cost} 
\label{assump:cost}
Let the set $\stable{} := \cap \stable{\inode}$ be non-empty and $\gain{0}\in \stable{}$ be the policy used to initialize Algorithm~\ref{algo:dis_com}, then $C(\gain{0})<\infty$.
\end{assumption}

The following assumption ensures that the systems generated using the available uncertainty realizations are sufficiently close to each other. The bound assumed below is critical for the analysis that follows.
\begin{assumption}
\longthmtitle{Bounded heterogeneity}
\label{assump:diversity}
For all $\mgain{}\in \stable{}$ such that $\kcost{}{\mgain{}}<\infty$, there exists a constant $\bgrad>0$ satisfying,
\begin{equation*}
\frac{1}{\tnodes}\sum_{\inode\in\nodes}\normb{\gradkcost{\inode}{\mgain{}} -\gradkcost{}{\mgain{}}}^2 \leq \bgrad^2.
\end{equation*}
\end{assumption}

The motivation for this assumption is similar to that of~\cite[Lemma 4]{LFT-DZ-JA-HW:24}, where the authors had established a similar bound on the diversity of gradients based on the diversity in the system and cost matrices. In our case, the diversity in system matrices is bounded as the uncertainties $\unA{}$ and $\unB{}$ are bounded. Therefore, Assumption~\ref{assump:diversity} aligns with the established theory. Algorithm~\ref{algo:dis_com} requires the estimation of an optimizer of the inner problem~\eqref{eq:inner} up to a desired accuracy. The LQR cost as a function of the feedback policy satisfies the PL-inequality~\cite[Lemma 3]{MF-RG-SK-MM:18}, i.e., let $\mgain{\inode}{*}$ be the optimal policy for a system realization $\inode\in\nodes$, then for any stable policy $\mgain{}$ and some constant $\graddom>0$, it holds that
\begin{equation}
\label{eq:graddom}
\kcost{\inode}{\mgain{}}-\kcost{\inode}{\mgain{\inode}{*}} \leq \frac{\graddom}{2} \norm{\gradkcost{\inode}{\mgain{}}}^2,
\end{equation}
In our approach, we add a quadratic regularizer term to this gradient-dominated function. We have established in Lemma~\ref{lem:megraddom} that the regularized cost~\eqref{eq:imelqr} also satisfies the PL inequality. This means it is possible to solve the inner minimization problem to any specified accuracy $\biopt$ using gradient descent (or Gauss-Newton or Natural policy gradient) iterations of order $\bigO{\log{(1/\biopt)}}$~\cite[Theorem 7, Theorem 9]{MF-RG-SK-MM:18}. Next, we build a sequence of auxiliary lemmas that will be useful in analyzing Algorithm~\ref{algo:dis_com}. First, we show the gradient dominance property of the Moreau regularized cost in~\eqref{eq:imelqr}.
\begin{lemma}
\longthmtitle{Gradient dominance of Moreau Envelope cost}
\label{lem:megraddom}
Consider a policy $\Breve{\mgain{}}\in\R{\ipdim\times\dvdim}{}$, then
\begin{equation}
\label{eq:MoreauPL}
\hspace{-0.2cm}\mcost{\inode}{\Breve{\mgain{}}} {-} \min_{\mgain{}\in \stable{i}} \mcost{\inode}{\mgain{}} \leq  \pbrack{\frac{\graddom}{2}{+}\frac{1}{2\reg}} \normb{\gradmcost{\inode}{\Breve{\mgain{}}}}^2.
\end{equation}
\end{lemma}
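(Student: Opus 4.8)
The plan is to relate the Moreau envelope $\mcost{\inode}{\cdot}$ and its gradient to the underlying LQR cost $\kcost{\inode}{\cdot}$, and then invoke the PL inequality~\eqref{eq:graddom} for the latter. First, I would recall the two standard facts about Moreau envelopes of a function $f$ with modulus $\reg$: (i) the envelope is differentiable with $\nabla (\text{env} f)(K) = \reg(K - \hat K(K))$, where $\hat K(K) = \argmin_{\Breve K} f(\Breve K) + \tfrac{\reg}{2}\|\Breve K - K\|^2$ is the proximal point — this is exactly~\eqref{eq:gimelqr}–\eqref{eq:inner}; and (ii) the envelope value equals $\mcost{\inode}{K} = \kcost{\inode}{\hat K(K)} + \tfrac{\reg}{2}\|\hat K(K) - K\|^2$. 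I would also note that $\min_{K\in\stable{i}} \mcost{\inode}{K} = \min_{K\in\stable{i}} \kcost{\inode}{K} = \kcost{\inode}{\mgain{\inode}{*}}$, since adding the nonnegative proximal term and then minimizing over both $K$ and $\Breve K$ recovers the original infimum (attained by taking $\Breve K = K = \mgain{\inode}{*}$).

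Next, starting from~\eqref{eq:MoreauPL}'s left-hand side, I would bound
\[
\mcost{\inode}{\Breve{\mgain{}}} - \kcost{\inode}{\mgain{\inode}{*}}
= \kcost{\inode}{\hat K} + \tfrac{\reg}{2}\|\hat K - \Breve{\mgain{}}\|^2 - \kcost{\inode}{\mgain{\inode}{*}},
\]
where I abbreviate $\hat K = \dmcgain_{\inode}[\Breve{\mgain{}}]$. The first and third terms together are controlled by the PL inequality~\eqref{eq:graddom} applied at $\hat K$: $\kcost{\inode}{\hat K} - \kcost{\inode}{\mgain{\inode}{*}} \le \tfrac{\graddom}{2}\|\gradkcost{\inode}{\hat K}\|^2$. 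For the proximal term, I would use the first-order optimality condition of the inner problem~\eqref{eq:inner}, namely $\gradkcost{\inode}{\hat K} + \reg(\hat K - \Breve{\mgain{}}) = 0$, which gives $\reg(\hat K - \Breve{\mgain{}}) = -\gradkcost{\inode}{\hat K}$, hence $\tfrac{\reg}{2}\|\hat K - \Breve{\mgain{}}\|^2 = \tfrac{1}{2\reg}\|\gradkcost{\inode}{\hat K}\|^2$. Combining, the left side is at most $\big(\tfrac{\graddom}{2} + \tfrac{1}{2\reg}\big)\|\gradkcost{\inode}{\hat K}\|^2$. The final step is to observe that the same optimality condition gives $\gradmcost{\inode}{\Breve{\mgain{}}} = \reg(\Breve{\mgain{}} - \hat K) = \gradkcost{\inode}{\hat K}$, so $\|\gradkcost{\inode}{\hat K}\|^2 = \|\gradmcost{\inode}{\Breve{\mgain{}}}\|^2$, which yields exactly~\eqref{eq:MoreauPL}.

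The main obstacle is justifying the Moreau-envelope machinery in this non-convex, constrained LQR setting: the results (i)–(ii) are classical for convex or weakly-convex $f$ on all of $\R{}{}$, but here $\kcost{\inode}{\cdot}$ is non-convex and only finite on the open set $\stable{i}$. I would need to argue that for $\Breve{\mgain{}}$ in (or near) $\stable{i}$ the inner minimizer $\hat K$ exists, lies in $\stable{i}$ (as noted in the Remark following~\eqref{eq:imelqr}), and is a stationary point so that the first-order condition $\gradkcost{\inode}{\hat K} + \reg(\hat K - \Breve{\mgain{}}) = 0$ holds — this relies on coercivity of $\kcost{\inode}{\cdot}$ on sublevel sets together with the added strongly-convex-in-$\Breve K$ regularizer forcing the minimizer into the interior. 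Differentiability of $\mcost{\inode}{\cdot}$ with the stated gradient formula then follows from Danskin-type / envelope arguments once $\hat K$ is shown to be unique and interior; if uniqueness is delicate, the inequality chain above in fact only needs one optimal $\hat K$, so I would state everything in terms of an arbitrary minimizer and avoid claiming uniqueness.
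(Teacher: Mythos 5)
Your proof is correct and follows essentially the same route as the paper's: decompose $\mcost{\inode}{\Breve{\mgain{}}}-\min\mcost{\inode}{\cdot}$ via the inner minimizer $\widehat{\mgain{}}$, apply the PL inequality~\eqref{eq:graddom} at $\widehat{\mgain{}}$, and use the first-order optimality condition of~\eqref{eq:inner} to identify both $\frac{\reg}{2}\|\widehat{\mgain{}}-\Breve{\mgain{}}\|^2$ and $\gradkcost{\inode}{\widehat{\mgain{}}}$ with $\gradmcost{\inode}{\Breve{\mgain{}}}$. Your closing discussion of existence, interiority, and stationarity of the inner minimizer in the non-convex setting is a welcome addition that the paper glosses over by citing prior work.
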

\excludethis{
\begin{proof}
We extend the proof of~\cite[Proposition 4.1]{VA-NG-SV:22} to functions that satisfy PL inequality. For the policy $\Breve{\mgain{}}$ we have,  
\begin{align*}
\mcost{\inode}{\Breve{\mgain{}}} {-} \min_{\mgain{}\in \stable{i}} \mcost{\inode}{\mgain{}} {=} \kcost{\inode}{\widehat{\mgain{}}} {+} \frac{\reg}{2} \normb{\widehat{\mgain{}}-\Breve{\mgain{}}}^2 {-} \kcost{\inode}{\mgain{\inode}{*}},
\end{align*}
where, $\widehat{\mgain{}}=\argmin_{\mgain{}} \kcost{\inode}{\mgain{}} + \frac{\reg}{2} \normb{\mgain{}-\Breve{\mgain{}}}^2$ and $\mgain{\inode}{*}=\argmin_{\mgain{}}\kcost{\inode}{\gain{}}$. The equality follows because $\cost{\inode}$ satisfies the PL inequality, therefore $\min_{\mgain{}} \mcost{\inode}{\mgain{}}=\kcost{\inode}{\mgain{\inode}{*}}$. Now using~\eqref{eq:graddom} we have,
\begin{align*}
\mcost{\inode}{\Breve{\mgain{}}} {-} \min_{\mgain{}\in \stable{i}} \mcost{\inode}{\mgain{}} &\leq \frac{\graddom}{2} \normb{\gradkcost{\inode}{\widehat{\mgain{}}}}^2 + \frac{\reg}{2} \normb{\widehat{\mgain{}}-\Breve{\mgain{}}}^2 \\
&\overset{(a)}{=} \frac{\graddom}{2} \normb{\reg\bbrack{\Breve{\mgain{}}-\widehat{\mgain{}}}}^2 + \frac{\reg}{2} \normb{\widehat{\mgain{}}-\Breve{\mgain{}}}^2 \\
&\overset{(b)}{=}\pbrack{\frac{\graddom}{2}+\frac{1}{2\reg}} \normb{\gradmcost{\inode}{\Breve{\mgain{}}}}^2,
\end{align*}
where (a) follows from the optimality of $\widehat{\mgain{}}$ and the fact that $\kcost{\inode}{\mgain{\inode}{*}}\geq 0$, and (b) follows from the definition of the gradient of Moreau regularized cost. 
\end{proof}
}
The following lemma shows how the cost function's smoothness property is inherited from the Moreau regularized cost.
\begin{lemma}
\longthmtitle{Local smoothness of cost functions}
\label{lem:smooth}
For any given system realization $\inode\in\nodes$ and (stable) policies $\mgain{1}{},\mgain{2}{}\in  \stable{\inode}$ such that $\normb{\mgain{1}{}-\mgain{2}{}}\leq\gaindiv$, there exists $\smooth>0$ such that,
\begin{equation}
\label{eq:costsmooth}
\normb{\gradkcost{\inode}{\mgain{1}{}}-\gradkcost{\inode}{\mgain{2}{}}}\leq \smooth \normb{\mgain{1}{}-\mgain{2}{}}
\end{equation}
and, for some constant $\kappa>1$ if $\reg>\kappa \smooth$ it follows that,
\begin{equation}
\label{eq:mecostsmooth}
\normb{\gradmcost{\inode}{\mgain{1}{}}-\gradmcost{\inode}{\mgain{2}{}}}\leq \smoothME \normb{\mgain{1}{}-\mgain{2}{}},
\end{equation}
where, $\smoothME:={\smooth}/{(\kappa-1)}$.
\end{lemma}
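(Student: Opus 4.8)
The plan is to treat the two inequalities in sequence. The first, \eqref{eq:costsmooth}, is just the statement that the LQR cost $\kcost{\inode}{\cdot}$ is locally Lipschitz-smooth on the sublevel set of stabilizing policies with $\normb{\mgain{1}{}-\mgain{2}{}}\leq\gaindiv$; this is exactly the local smoothness property established in \cite{MF-RG-SK-MM:18} (their local $\smooth$-smoothness of the LQR cost around any stabilizing policy), so I would simply invoke that result to fix $\smooth$. The work is entirely in deriving \eqref{eq:mecostsmooth} from \eqref{eq:costsmooth}.

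First I would write out the Moreau gradient explicitly using \eqref{eq:gimelqr}: $\gradmcost{\inode}{\mgain{}} = \reg\big(\mgain{}- \dmcgain_{\inode}[\mgain{}]\big)$, so that
\begin{align*}
\gradmcost{\inode}{\mgain{1}{}}-\gradmcost{\inode}{\mgain{2}{}}
= \reg\big(\mgain{1}{}-\mgain{2}{}\big) - \reg\big(\dmcgain_{\inode}[\mgain{1}{}]-\dmcgain_{\inode}[\mgain{2}{}]\big).
\end{align*}
Hence it suffices to control $\normb{\dmcgain_{\inode}[\mgain{1}{}]-\dmcgain_{\inode}[\mgain{2}{}]}$, i.e.\ to show the proximal map $\dmcgain_{\inode}$ is Lipschitz with a constant strictly less than $1$ when $\reg > \kappa\smooth$. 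The standard route is the first-order optimality condition for the inner problem \eqref{eq:inner}: writing $\widehat{\mgain{}}_j := \dmcgain_{\inode}[\mgain{j}{}]$, optimality gives $\gradkcost{\inode}{\widehat{\mgain{}}_j} + \reg(\widehat{\mgain{}}_j - \mgain{j}{}) = 0$ for $j=1,2$. Subtracting the two identities and rearranging yields $\reg(\widehat{\mgain{}}_1-\widehat{\mgain{}}_2) - \reg(\mgain{1}{}-\mgain{2}{}) = -\big(\gradkcost{\inode}{\widehat{\mgain{}}_1}-\gradkcost{\inode}{\widehat{\mgain{}}_2}\big)$; taking norms, applying the triangle inequality and then \eqref{eq:costsmooth} to the right-hand side (with $\widehat{\mgain{}}_1,\widehat{\mgain{}}_2$ in place of $\mgain{1}{},\mgain{2}{}$) gives $\reg\normb{\widehat{\mgain{}}_1-\widehat{\mgain{}}_2} \le \reg\normb{\mgain{1}{}-\mgain{2}{}} + \smooth\normb{\widehat{\mgain{}}_1-\widehat{\mgain{}}_2}$, hence $\normb{\widehat{\mgain{}}_1-\widehat{\mgain{}}_2} \le \tfrac{\reg}{\reg-\smooth}\normb{\mgain{1}{}-\mgain{2}{}}$. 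Plugging this back into the displayed identity for the Moreau gradient difference and using the triangle inequality once more gives $\normb{\gradmcost{\inode}{\mgain{1}{}}-\gradmcost{\inode}{\mgain{2}{}}} \le \reg\big(1 + \tfrac{\reg}{\reg-\smooth}\big)\normb{\mgain{1}{}-\mgain{2}{}}$ — wait, the sign bookkeeping needs care; the cleaner bound comes from estimating $\reg\big(\mgain{1}{}-\mgain{2}{} - (\widehat{\mgain{}}_1-\widehat{\mgain{}}_2)\big)$ directly as equal to $-(\gradkcost{\inode}{\widehat{\mgain{}}_1}-\gradkcost{\inode}{\widehat{\mgain{}}_2})$, whose norm is at most $\smooth\normb{\widehat{\mgain{}}_1-\widehat{\mgain{}}_2} \le \smooth\cdot\tfrac{\reg}{\reg-\smooth}\normb{\mgain{1}{}-\mgain{2}{}}$, so $\smoothME = \tfrac{\reg\smooth}{\reg-\smooth}$. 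Then under $\reg > \kappa\smooth$ one has $\tfrac{\reg\smooth}{\reg-\smooth} < \tfrac{\kappa}{\kappa-1}\smooth$, which is not quite the stated $\smooth/(\kappa-1)$; I would reconcile this by instead using the looser bound on the Moreau gradient itself or by absorbing constants, but the essential mechanism — contraction of the prox map with factor $<1$ — is what drives the result, and I would present the cleanest constant that the algebra actually yields.

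The main obstacle I anticipate is twofold: (i) making the application of \eqref{eq:costsmooth} legitimate, which requires knowing that $\widehat{\mgain{}}_1$ and $\widehat{\mgain{}}_2$ are themselves stabilizing and stay within distance $\gaindiv$ of each other (so the local smoothness hypothesis applies) — this should follow from the Remark after \eqref{eq:imelqr} guaranteeing $\dmcgain_{\inode}[\cdot]\in\stable{\inode}$, combined with the prox contraction bound controlling $\normb{\widehat{\mgain{}}_1-\widehat{\mgain{}}_2}$ in terms of $\normb{\mgain{1}{}-\mgain{2}{}}\le\gaindiv$; and (ii) pinning down the exact constant $\smoothME = \smooth/(\kappa-1)$, which is a matter of choosing where to be generous in the triangle inequalities and using $\reg>\kappa\smooth \Rightarrow \reg - \smooth > (\kappa-1)\smooth$. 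I would organize the write-up so that the prox-contraction lemma is proved first in isolation, then the Moreau-gradient difference is bounded in one line.
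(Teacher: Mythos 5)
Your mechanism is correct, but note that the paper's own proof of this lemma contains no computation at all: inequality \eqref{eq:costsmooth} is quoted from \cite[Lemma 25]{MF-RG-SK-MM:18} (exactly as you propose), and inequality \eqref{eq:mecostsmooth} is quoted from \cite[Lemma 4.5]{MTT-SL-CAU:23}, which asserts that smoothness transfers to the Moreau envelope. The prox-contraction argument you spell out --- first-order optimality of the inner problem at $\dmcgain_{\inode}\bbrack{\mgain{1}{}}$ and $\dmcgain_{\inode}\bbrack{\mgain{2}{}}$, subtraction, the Lipschitz constant $\reg/(\reg-\smooth)$ for the prox map coming from $(\reg-\smooth)$-strong convexity of the regularized inner objective, and the identity $\gradmcost{\inode}{\mgain{1}{}}=\gradkcost{\inode}{\dmcgain_{\inode}\bbrack{\mgain{1}{}}}$ --- is precisely the proof of that cited lemma, so in substance you have reconstructed the citation rather than found a different route. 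What your version buys is that it surfaces two issues the citation hides, and both of your worries are legitimate. First, the constant $\smooth$ from \cite[Lemma 25]{MF-RG-SK-MM:18} is only local, valid within a radius $\gaindiv$ that itself depends on the policy and its cost, so invoking \eqref{eq:costsmooth} at the prox points requires exactly the justification you sketch (stability of the prox points via the remark after \eqref{eq:imelqr}, plus the contraction bound keeping them within range); the paper does not address this. Second, the constant you obtain is the one the argument actually produces: $\reg\smooth/(\reg-\smooth)$, which under $\reg>\kappa\smooth$ is at most $\kappa\smooth/(\kappa-1)$, equivalently at most $\reg/(\kappa-1)$ since $\reg-\smooth>(\kappa-1)\smooth$. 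The stated $\smoothME=\smooth/(\kappa-1)$ is smaller by a factor of $\kappa$ and does not follow from this argument; indeed it cannot hold in general, because as $\reg$ (hence $\kappa$) grows the envelope converges to the underlying LQR cost and its smoothness constant approaches $\smooth$ rather than $0$. It is almost certainly a transcription of the cited lemma with $\smooth$ written where $\reg$ was intended. Do not contort the triangle inequalities to reach $\smooth/(\kappa-1)$; state $\smoothME=\kappa\smooth/(\kappa-1)$ (or the weaker $\reg/(\kappa-1)$) and flag the discrepancy.
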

\excludethis{
\begin{proof}
Inequality~\eqref{eq:costsmooth} follows from~\cite[Lemma 25]{MF-RG-SK-MM:18}, where it was shown that for $\gaindiv:=\min\pbrack{\frac{\sigma_{\min}\bbrack{\matQ{\inode}}}{4\kcost{\inode}{\mgain{2}{}}\normb{\matB{\inode}}\bbrack{\normb{\matB{\inode}+\matB{\inode}\mgain{2}{}}+1}},\normb{\mgain{2}{}}}$, $\smooth$ is a polynomial in $\frac{\cost{\inode}\pbrack{\gain{0}}}{\sigma_{\min}\bbrack{\EE{\dv{\inode}{0}\sim\istate{\inode}}{\dv{\inode}{0}{\dv{\inode}{0}}^{\top}}}\sigma_{\min}\pbrack{\matQ}}$, $\EE{}{\normb{\dv{\inode}{0}}^2}$, $\normb{\matA{\inode}}$, $\normb{\matB{\inode}}$, $\normb{\matR{}}$, and ${1}/{\sigma_{\min}\pbrack{\matR{}}}$. The second inequality~\eqref{eq:mecostsmooth} follows from~\cite[Lemma 4.5]{MTT-SL-CAU:23}, where it was shown that the smoothness properties get transferred to the Moreau envelope regularized cost function.     
\end{proof}}

We have assumed a diversity (c.f. heterogeneity) bound for the gradients of LQR costs in Assumption~\ref{assump:diversity}. Still, our approach minimizes a cost function, the sum of Moreau regularized LQR costs. The following Lemma quantifies the heterogeneity of the Moreau regularized costs among various system realizations under Assumption~\ref{assump:diversity} and in the view of Lemma~\ref{lem:smooth}. 
\begin{lemma}
\longthmtitle{\cite[Lemma 2]{CTD-NT-JN:20}, Bounded Heterogeneity of Moreau Envelope costs}
\label{lem:diversity}
For some policy $\mgain{}\in\R{\ipdim\times\dvdim}{}$, such that $\kcost{}{\mgain{}}<\infty$ and $\reg>2\sqrt{2}\smooth$, it follows that
\begin{equation}
\begin{split}
&\frac{1}{\tnodes}\sum_{\inode\in\nodes}\normb{\gradmcost{\inode}{\mgain{}}-\gradmcost{\mgain{}}}^2 \\
&\hspace{1.3cm}\leq \frac{8\smooth^2}{\reg^2-8\smooth^2} \normb{\gradmcost{\mgain{}}} + \frac{2\reg^2}{\reg^2-8\smooth^2} \bgrad^2.  
\end{split}
\end{equation}
\end{lemma}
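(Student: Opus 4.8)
The statement is cited as \cite[Lemma 2]{CTD-NT-JN:20}, so the plan is to adapt that argument to our notation, using Lemma~\ref{lem:smooth} to supply the smoothness constant $\smoothME = \smooth/(\kappa-1)$ for the Moreau-regularized costs and Assumption~\ref{assump:diversity} for the heterogeneity of the raw LQR gradients. The overall idea is a bias–variance style decomposition: the deviation of each $\gradmcost{\inode}{\mgain{}}$ from the average $\gradmcost{\mgain{}}$ is controlled by (i) how far each $\gradmcost{\inode}{\mgain{}}$ is from the corresponding raw gradient $\gradkcost{\inode}{\mgain{}}$ (a quantity governed by the proximal map and hence by the smoothness of $\cost{\inode}$), and (ii) the assumed diversity $\bgrad^2$ of the raw gradients.

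Concretely, first I would recall that $\gradmcost{\inode}{\mgain{}} = \lambda(\mgain{} - \dmcgain_{\inode}[\mgain{}])$ and that $\dmcgain_{\inode}[\mgain{}]$ satisfies the first-order condition $\gradkcost{\inode}{\dmcgain_{\inode}[\mgain{}]} + \lambda(\dmcgain_{\inode}[\mgain{}] - \mgain{}) = 0$, i.e.\ $\gradmcost{\inode}{\mgain{}} = \gradkcost{\inode}{\dmcgain_{\inode}[\mgain{}]}$. Hence $\gradmcost{\inode}{\mgain{}} - \gradkcost{\inode}{\mgain{}} = \gradkcost{\inode}{\dmcgain_{\inode}[\mgain{}]} - \gradkcost{\inode}{\mgain{}}$, which by the $\smooth$-smoothness in~\eqref{eq:costsmooth} is bounded by $\smooth\,\normb{\dmcgain_{\inode}[\mgain{}] - \mgain{}} = (\smooth/\lambda)\normb{\gradmcost{\inode}{\mgain{}}}$. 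So each Moreau gradient is within a factor $\smooth/\lambda$ of the corresponding raw gradient. Next, write $\gradmcost{\inode}{\mgain{}} - \gradmcost{\mgain{}} = \big(\gradkcost{\inode}{\mgain{}} - \gradkcost{\mgain{}}\big) + \big(\gradmcost{\inode}{\mgain{}} - \gradkcost{\inode}{\mgain{}}\big) - \frac1V\sum_{j}\big(\gradmcost{j}{\mgain{}} - \gradkcost{j}{\mgain{}}\big)$, apply $\normb{a+b+c}^2 \le 3(\normb{a}^2 + \normb{b}^2 + \normb{c}^2)$ (or a cleaner two-term split), average over $\inode$, and use Assumption~\ref{assump:diversity} for the first group and the bound just derived for the rest. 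This produces an inequality of the form $\frac1V\sum_\inode \normb{\gradmcost{\inode}{\mgain{}} - \gradmcost{\mgain{}}}^2 \le c_1 \bgrad^2 + c_2 \frac{\smooth^2}{\lambda^2}\cdot\frac1V\sum_\inode\normb{\gradmcost{\inode}{\mgain{}}}^2$.

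The final step is to absorb the $\frac1V\sum_\inode\normb{\gradmcost{\inode}{\mgain{}}}^2$ term back into the left side. Using $\normb{\gradmcost{\inode}{\mgain{}}}^2 \le 2\normb{\gradmcost{\inode}{\mgain{}} - \gradmcost{\mgain{}}}^2 + 2\normb{\gradmcost{\mgain{}}}^2$ and averaging gives $\frac1V\sum_\inode\normb{\gradmcost{\inode}{\mgain{}}}^2 \le 2\,\mathrm{(LHS)} + 2\normb{\gradmcost{\mgain{}}}^2$; substituting and rearranging, provided $\lambda^2 > 8\smooth^2$ so the coefficient $1 - 4c_2\smooth^2/\lambda^2$ is positive, yields exactly the claimed bound with denominators $\lambda^2 - 8\smooth^2$. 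The constants $8\smooth^2$ and $2\lambda^2$ in the numerators should drop out once the constants $c_1, c_2$ from the $3$-term (or $2$-term) Cauchy–Schwarz split are chosen to match; I would fix those splits so that the bookkeeping lands on the stated coefficients.

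The main obstacle is purely the constant-chasing: getting the decomposition and the repeated applications of $\normb{\sum x_i}^2 \le n\sum\normb{x_i}^2$ to produce precisely the coefficients $\frac{8\smooth^2}{\lambda^2-8\smooth^2}$ and $\frac{2\lambda^2}{\lambda^2-8\smooth^2}$, and verifying that the condition $\lambda > 2\sqrt{2}\,\smooth$ is exactly what makes the absorption step valid. A secondary point worth a sentence is well-definedness: one must argue $\mgain{}$ (and the nearby proximal points) lie in a region where the local smoothness~\eqref{eq:costsmooth} applies, which holds because $\cost{}(\mgain{}) < \infty$ controls $\cost{\inode}(\mgain{})$ for each $\inode$ and the proximal map cannot increase the regularized objective, keeping $\dmcgain_{\inode}[\mgain{}]$ within a sublevel set and hence within distance $\gaindiv$ of $\mgain{}$ for $\lambda$ large enough. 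There is also a minor typo in the statement (the first term on the right should read $\normb{\gradmcost{\mgain{}}}^2$, not $\normb{\gradmcost{\mgain{}}}$), which the proof will implicitly correct.
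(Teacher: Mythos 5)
The paper itself gives no proof of this lemma---it is imported verbatim from \cite[Lemma 2]{CTD-NT-JN:20}---and your proposal correctly reconstructs that reference's argument: the first-order optimality condition of the proximal subproblem gives $\gradmcost{\inode}{\mgain{}} = \gradkcost{\inode}{\dmcgain_{\inode}\bbrack{\mgain{}}}$, smoothness then bounds $\normb{\gradmcost{\inode}{\mgain{}}-\gradkcost{\inode}{\mgain{}}}$ by $(\smooth/\reg)\normb{\gradmcost{\inode}{\mgain{}}}$, and the final absorption step is exactly what the condition $\reg>2\sqrt{2}\smooth$ enables, producing the $\reg^2-8\smooth^2$ denominators. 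You also correctly identify the typo in the statement (the first right-hand term should be $\normb{\gradmcost{\mgain{}}}^2$) and the one LQR-specific caveat---that the smoothness in Lemma~\ref{lem:smooth} is only local, so one must check $\dmcgain_{\inode}\bbrack{\mgain{}}$ stays within the radius $\gaindiv$ of $\mgain{}$---which the paper itself leaves implicit.
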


The following lemma quantifies the drift induced due to local iterations $P$ in Algorithm~\ref{algo:dis_com}. The estimate of the policy at $\oc+1$ outer iteration can be written as
\begin{align}
\label{eq:outerdescent}
\gain{\oc+1} = \gain{\oc} - \vlr \gradgk{\oc}
\end{align}
where, $\vlr := \olr \mlr \tinner$ and $\gradgk{\oc} := \frac{1}{\tnodes\tinner} \sum_{\inode\in\nodes}\sum_{\ic=0}^{\tinner} \gradgm{\inode}{\oc}{\ic}$, such that $\gradgm{\inode}{\oc}{\ic}:=\reg\big(\mgain{\inode}{\oc,\ic}-\mcgain{\inode}{\oc}{\ic}\big)$.
\begin{lemma}
\longthmtitle{~\cite[Lemma 5]{CTD-NT-JN:20}, Cumulative gradient drift}
\label{lem:drift}
For algorithm~\ref{algo:dis_com}, at an outer iteration $\oc$ and under the assumption that $\vlr\leq {\mlr}/({2\smoothME})$, the following holds:
\begin{equation}
\begin{split}
&\frac{1}{\tnodes\tinner} \sum_{\inode\in\nodes}\sum_{\ic=0}^{\tinner}\normb{\gradgm{\oc}{\inode}{\ic}-\gradmcost{\inode}{\gain{\oc}}}^2 \leq 2\reg^2\biopt^2 +\\
&\hspace{1cm} \frac{16\smoothME\vlr^2}{\mlr^2}\pbrack{\frac{2\reg^2\biopt^2}{\tinner} + 3\frac{1}{\tnodes} \sum_{\inode\in\nodes}\normb{\gradmcost{\inode}{\gain{\oc}}}^2}.    
\end{split}
\end{equation}
\end{lemma}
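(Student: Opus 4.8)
The plan is to adapt the local client-drift argument of \cite[Lemma 5]{CTD-NT-JN:20} to the inexact inner oracle of Algorithm~\ref{algo:dis_com}. Fix the outer index $\oc$ (suppressed below) and abbreviate the actual inner update direction as $g_{\inode}^{\ic} := \gradgm{\oc}{\inode}{\ic} = \reg\pbrack{\mgain{\inode}{\oc,\ic} - \bar K_{\inode}^{\ic}}$, where $\bar K_{\inode}^{\ic}$ is the $\biopt$-accurate surrogate for $\dmcgain_{\inode}\bbrack{\mgain{\inode}{\oc,\ic}}$, and write $\bar g_{\inode} := \gradmcost{\inode}{\gain{\oc}}$. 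The first step is the Young/triangle split
\[
\normb{g_{\inode}^{\ic} - \bar g_{\inode}}^2 \le 2\normb{g_{\inode}^{\ic} - \gradmcost{\inode}{\mgain{\inode}{\oc,\ic}}}^2 + 2\normb{\gradmcost{\inode}{\mgain{\inode}{\oc,\ic}} - \gradmcost{\inode}{\gain{\oc}}}^2 .
\]
Since $\gradmcost{\inode}{\mgain{\inode}{\oc,\ic}} = \reg\pbrack{\mgain{\inode}{\oc,\ic} - \dmcgain_{\inode}\bbrack{\mgain{\inode}{\oc,\ic}}}$ and $\normb{\dmcgain_{\inode}\bbrack{\mgain{\inode}{\oc,\ic}} - \bar K_{\inode}^{\ic}} \le \biopt$, the first term is at most $2\reg^2\biopt^2$, and by the Moreau-envelope smoothness of Lemma~\ref{lem:smooth} (valid once $\reg>\kappa\smooth$) the second is at most $2\smoothME^2\normb{\mgain{\inode}{\oc,\ic} - \gain{\oc}}^2$.

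Next I would control the per-client displacement $\normb{\mgain{\inode}{\oc,\ic} - \gain{\oc}}^2$ by unrolling the inner recursion $\mgain{\inode}{\oc,\ic+1} = \mgain{\inode}{\oc,\ic} - \olr\, g_{\inode}^{\ic}$ from $\mgain{\inode}{\oc,0} = \gain{\oc}$, which gives $\mgain{\inode}{\oc,\ic} - \gain{\oc} = -\olr\sum_{j=0}^{\ic-1} g_{\inode}^{j}$ and hence, by Cauchy--Schwarz, $\normb{\mgain{\inode}{\oc,\ic} - \gain{\oc}}^2 \le \olr^2\,\ic\sum_{j=0}^{\ic-1}\normb{g_{\inode}^{j}}^2$. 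Bounding $\normb{g_{\inode}^{j}}^2 \le 2\normb{g_{\inode}^{j} - \bar g_{\inode}}^2 + 2\normb{\bar g_{\inode}}^2$ and inserting the first-step estimate for $\normb{g_{\inode}^{j} - \bar g_{\inode}}^2$ makes the target quantity \emph{self-referential}: after averaging over $\inode\in\nodes$ and $\ic = 0,\dots,\tinner$, the left-hand side $\tfrac{1}{\tnodes\tinner}\sum_{\inode,\ic}\normb{g_{\inode}^{\ic} - \bar g_{\inode}}^2$ reappears on the right multiplied by a factor of order $\smoothME^2\olr^2\tinner^2 = \smoothME^2\vlr^2/\mlr^2$, plus an $O(\reg^2\biopt^2)$ term from current-step oracle error, an $O(\reg^2\biopt^2/\tinner)$ term from the averaged past-step oracle errors inside the drift, and an $O\!\big(\tfrac1\tnodes\sum_\inode\normb{\bar g_{\inode}}^2\big)$ term from the exact-gradient norms.

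Finally I would close the recursion: substituting $\olr = \vlr/(\mlr\tinner)$, the recursive factor becomes proportional to $\smoothME^2\vlr^2/\mlr^2$, which the hypothesis $\vlr \le \mlr/(2\smoothME)$ forces to be at most $1/4$, so it can be absorbed into the left-hand side; regrouping the remaining terms yields the stated inequality, with the standalone $2\reg^2\biopt^2$, the $2\reg^2\biopt^2/\tinner$, and the $\tfrac{3}{\tnodes}\sum_\inode\normb{\gradmcost{\inode}{\gain{\oc}}}^2$ arising from the three sources identified above. I expect the main obstacle to be bookkeeping rather than conceptual: one must keep the three error sources ($\biopt$, the smoothness constant $\smoothME$, and the gradient magnitude) disentangled through the recursive substitution and verify that $\vlr \le \mlr/(2\smoothME)$ is exactly the condition that produces the constants ($16$, $2$, $3$) appearing in the statement. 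A secondary subtlety is that Lemma~\ref{lem:smooth} is only \emph{local} (it holds while iterates stay within a $\gaindiv$-ball and remain in $\stable{\inode}$), so one must also argue, as in \cite[Lemma 5]{CTD-NT-JN:20} and using the PL-based contraction of the inner loop together with Assumption~\ref{assump:cost}, that the inner iterates $\mgain{\inode}{\oc,\ic}$ never leave the stabilizing set where these estimates are valid.
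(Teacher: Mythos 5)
The paper does not actually prove this lemma: it is imported verbatim (with notation adapted) from the cited reference, so there is no in-paper argument to compare against. Your reconstruction is the standard client-drift argument that underlies that cited result, and its structure is sound: the Young split into oracle error plus smoothness-controlled displacement, the unrolling of the inner recursion from $\mgain{\inode}{\oc,0}=\gain{\oc}$, the Cauchy--Schwarz bound $\normb{\mgain{\inode}{\oc,\ic}-\gain{\oc}}^2\leq\olr^2\,\ic\sum_{j<\ic}\normb{g_{\inode}^{j}}^2$, and the absorption of the self-referential term using $\vlr\leq\mlr/(2\smoothME)$ are exactly the right ingredients, and you correctly trace the three constants in the statement to the current-step oracle error, the averaged past-step oracle errors, and the exact local gradient norms. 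Two remarks. First, your derivation naturally yields a drift coefficient proportional to $\smoothME^2\vlr^2/\mlr^2$, whereas the lemma as printed has $16\smoothME\vlr^2/\mlr^2$; the paper's own proof of Theorem~\ref{thm:convergence} (step (g)) uses $16\smoothME^2\vlr^2/\mlr^2$, so the printed statement appears to carry a typo and your version is the consistent one. Second, your closing caveat about the local validity of Lemma~\ref{lem:smooth} and the need for inner iterates to remain in $\stable{\inode}$ is a genuine issue that the paper defers to Proposition~\ref{prop:locstability} and its corollaries (which are stated \emph{after} this lemma), so a fully rigorous treatment would need to order these dependencies carefully; flagging it rather than resolving it is acceptable for a proof sketch but would need to be closed in a complete write-up.
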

The following proposition shows that Moreau regularized cost for a realization $\inode\in\nodes$ decreases during the iterations of the inner loop.
\begin{proposition}
\longthmtitle{Decrease in the Moreau Envelope cost}
\label{prop:locstability}
Let Assumptions~\ref{assump:persistence},~\ref{assump:cost} and~\ref{assump:diversity} hold, and $\olr\leq{1}/{\smoothME}$ with $\tinner\geq 1$. Consider an iterate $\gain{\oc}$ inside Algorithm~\ref{algo:dis_com}, such that $\kcost{\inode}{\gain{\oc}}<\infty$, then $\mcost{\inode}{\mgain{\inode}{\oc,\tinner}}\leq\mcost{\inode}{\gain{\oc}}$ and $\kcost{\inode}{\mgain{\inode}{\oc,\tinner}}<\infty$.
\end{proposition}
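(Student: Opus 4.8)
The plan is to show that each inner gradient-descent step on the Moreau-regularized cost $\mcost{\inode}{\cdot}$ is a descent step, and then chain the $\tinner$ steps together. First I would observe that the inner update $\mgain{\inode}{\oc,\ic+1}=\mgain{\inode}{\oc,\ic}-\olr\reg(\mgain{\inode}{\oc,\ic}-\bar K)$ is an approximate gradient-descent step on $\mcost{\inode}{\cdot}$, since by~\eqref{eq:gimelqr} the exact gradient is $\gradmcost{\inode}{\mgain{\inode}{\oc,\ic}}=\reg(\mgain{\inode}{\oc,\ic}-\dmcgain_{\inode}[\mgain{\inode}{\oc,\ic}])$ and $\bar K$ is within $\biopt$ of $\dmcgain_{\inode}[\mgain{\inode}{\oc,\ic}]$. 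Using the $\smoothME$-smoothness of $\mcost{\inode}{\cdot}$ from Lemma~\ref{lem:smooth} together with the descent lemma, and the step-size condition $\olr\le 1/\smoothME$, I would get a per-step inequality of the form $\mcost{\inode}{\mgain{\inode}{\oc,\ic+1}}\le \mcost{\inode}{\mgain{\inode}{\oc,\ic}}-c\,\olr\normb{\gradmcost{\inode}{\mgain{\inode}{\oc,\ic}}}^2 + (\text{error terms in }\biopt)$. The cleanest route is actually to not carry the $\biopt$ error at all for the monotonicity claim: since the PL/gradient-dominance of $\mcost{\inode}{\cdot}$ (Lemma~\ref{lem:megraddom}) guarantees $\dmcgain_{\inode}[\cdot]$ is well-defined and the exact proximal step strictly decreases $\mcost{\inode}{\cdot}$, one shows the inexact step still decreases it provided $\biopt$ is within the tolerance already assumed by the algorithm; alternatively, absorb $\biopt$ as in Lemma~\ref{lem:drift}'s bookkeeping. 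I would state the per-step bound so that summing over $\ic=0,\dots,\tinner-1$ telescopes to $\mcost{\inode}{\mgain{\inode}{\oc,\tinner}}\le\mcost{\inode}{\mgain{\inode}{\oc,0}}=\mcost{\inode}{\gain{\oc}}$, using $\mgain{\inode}{\oc,0}=\gain{\oc}$ from the algorithm.

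The second half of the statement, $\kcost{\inode}{\mgain{\inode}{\oc,\tinner}}<\infty$ (i.e.\ $\mgain{\inode}{\oc,\tinner}\in\stable{\inode}$), I would get as a corollary of the monotone decrease. The key link is the elementary inequality $\kcost{\inode}{\mgain{\inode}{\oc,\ic}}\le \mcost{\inode}{\mgain{\inode}{\oc,\ic}}$ is \emph{not} directly true, so instead I would argue: $\mcost{\inode}{\mgain{\inode}{\oc,\ic}}=\kcost{\inode}{\dmcgain_{\inode}[\mgain{\inode}{\oc,\ic}]}+\frac{\reg}{2}\normb{\dmcgain_{\inode}[\mgain{\inode}{\oc,\ic}]-\mgain{\inode}{\oc,\ic}}^2$, which is finite whenever $\mgain{\inode}{\oc,\ic}$ is such that the inner problem has a stabilizing minimizer; and crucially $\mcost{\inode}{\gain{\oc}}\le\mcost{\inode}{\gain{\oc}}$ trivially with $\kcost{\inode}{\gain{\oc}}<\infty$ giving the bound $\mcost{\inode}{\gain{\oc}}\le\kcost{\inode}{\gain{\oc}}<\infty$ (take $\Breve{\mgain{}}=\gain{\oc}$ in the inner minimization). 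Then monotonicity gives $\mcost{\inode}{\mgain{\inode}{\oc,\ic}}\le\kcost{\inode}{\gain{\oc}}$ for all $\ic$, so all iterates stay in a sublevel set of $\mcost{\inode}{\cdot}$. To conclude $\mgain{\inode}{\oc,\tinner}$ itself is stabilizing (not just that its proximal point is), I would invoke the standard fact used throughout~\cite{MF-RG-SK-MM:18}: gradient-descent iterates on a PL, locally-smooth cost with a suitable step-size remain in a sublevel set, and all points in a sublevel set of $\mcost{\inode}{\cdot}$ (equivalently, with bounded $\kcost{\inode}{}$ of their prox point plus bounded distance) are themselves stabilizing — or more directly, bound $\normb{\mgain{\inode}{\oc,\ic}-\dmcgain_{\inode}[\mgain{\inode}{\oc,\ic}]}$ via gradient dominance and the sublevel-set bound, and use that $\dmcgain_{\inode}[\cdot]\in\stable{\inode}$ together with a closeness-to-stability argument (perturbations of a Schur closed loop remain Schur for small enough perturbation, controlled by the cost bound).

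The main obstacle I anticipate is exactly this last point: passing from "the proximal point $\dmcgain_{\inode}[\mgain{\inode}{\oc,\ic}]$ is stabilizing" to "the iterate $\mgain{\inode}{\oc,\ic}$ is stabilizing," since the set $\stable{\inode}$ is open and non-convex, and a gradient step could in principle leave it. The resolution must exploit that $\olr\le 1/\smoothME$ keeps the step small relative to the region where $\smoothME$-smoothness holds (Lemma~\ref{lem:smooth} only asserts smoothness on a ball of radius $\gaindiv$), so one needs an inductive argument: assuming $\mgain{\inode}{\oc,\ic}$ is stabilizing with $\mcost{\inode}{\cdot}$ bounded by $\kcost{\inode}{\gain{\oc}}$, the step size guarantees $\mgain{\inode}{\oc,\ic+1}$ stays within the local-smoothness ball and hence the descent inequality applies and delivers both the cost decrease and continued stability. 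This is the same style of argument as in~\cite[proof of Theorem 7]{MF-RG-SK-MM:18} adapted to the Moreau-regularized cost, and I would present it as an induction on $\ic$ with the invariant "$\kcost{\inode}{\mgain{\inode}{\oc,\ic}}<\infty$ and $\mcost{\inode}{\mgain{\inode}{\oc,\ic}}\le\mcost{\inode}{\gain{\oc}}$."
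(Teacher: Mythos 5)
Your first half --- the descent lemma with the $\smoothME$-smoothness of $\mcost{\inode}{\cdot}$, the step-size condition $\olr\le 1/\smoothME$, gradient dominance from Lemma~\ref{lem:megraddom}, and telescoping over $\ic$ --- is exactly the paper's argument for $\mcost{\inode}{\mgain{\inode}{\oc,\tinner}}\leq\mcost{\inode}{\gain{\oc}}$; like your ``cleanest route,'' the paper simply treats Step 6 as an exact gradient step and does not carry the $\biopt$ error inside this proposition.

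The second half is where you diverge, and where your proposal has a genuine gap. You correctly isolate the obstacle: a sublevel-set bound on $\mcost{\inode}{\cdot}$ only certifies that the prox points $\mcgain{\inode}{\oc}{\ic}$ are stabilizing, not the iterates $\mgain{\inode}{\oc,\ic}$ themselves. But your proposed resolution --- a Schur-perturbation argument --- is never made quantitative: you would need $\normb{\mgain{\inode}{\oc,\ic}-\mcgain{\inode}{\oc}{\ic}}=\frac{1}{\reg}\normb{\gradmcost{\inode}{\mgain{\inode}{\oc,\ic}}}$ to fall below the stability radius around $\mcgain{\inode}{\oc}{\ic}$, and nothing in the hypotheses forces that; the sublevel set controls the gradient norm, but not against that radius. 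The paper closes the gap by a different, purely algebraic route: (i) since $\mcgain{\inode}{\oc}{\ic}$ minimizes the inner objective, $\kcost{\inode}{\mcgain{\inode}{\oc}{\ic}}\leq\mcost{\inode}{\mgain{\inode}{\oc,\ic}}\leq\kcost{\inode}{\mgain{\inode}{\oc,\ic}}$ (an ordering you also note); and (ii) Step 6 writes $\mgain{\inode}{\oc,\ic+1}=\pbrack{1-\olr\reg}\mgain{\inode}{\oc,\ic}+\olr\reg\,\mcgain{\inode}{\oc}{\ic}$ as a convex combination of two points with ordered costs, from which it concludes $\kcost{\inode}{\mcgain{\inode}{\oc}{\ic}}\leq\kcost{\inode}{\mgain{\inode}{\oc,\ic+1}}\leq\kcost{\inode}{\mgain{\inode}{\oc,\ic}}$ and inducts over $\ic$, with no stability-margin estimate needed. (That sandwich along the segment is itself asserted rather than derived --- $\kcost{\inode}{\cdot}$ and $\stable{\inode}$ are non-convex --- so your instinct that this is the delicate step is sound; but it is the paper's mechanism, and your route does not replace it with anything complete.) One point where your framing is better: you organize the whole argument as an induction on $\ic$ with the joint invariant ``stabilizing and $\mcost{\inode}{\cdot}$ non-increasing,'' which is logically needed because the descent inequality at step $\ic$ presupposes $\mgain{\inode}{\oc,\ic}$ lies where Lemma~\ref{lem:smooth} applies, whereas the paper proves the two claims sequentially.
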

\excludethis{
\begin{proof}
By Step 2 of the MEMLQR algorithm, it can be seen that $\mgain{\inode}{\oc,0}=\gain{\oc}$. Then, using the smoothness of the Moreau regularized cost, we get,
\begin{align*}
 &\mcost{\inode}{\mgain{\inode}{\oc,1}} - \mcost{\inode}{\mgain{\inode}{\oc,0}} \\
 &\hspace{0.4cm}\overset{(a)}{\leq} \inprodb{\gradmcost{\inode}{\mgain{\inode}{\oc,0}}}{\mgain{\inode}{\oc,1}-\mgain{\inode}{\oc,0}} + \frac{\smoothME}{2} \normb{\mgain{\inode}{\oc,1}-\mgain{\inode}{\oc,0}}^2 \\
 &\hspace{0.4cm}\overset{(b)}{=} -\olr\inprodb{\gradmcost{\inode}{\mgain{\inode}{\oc,0}}}{\gradmcost{\inode}{\mgain{\inode}{\oc,0}}} \\
 &\hspace{4cm}+ \frac{\smoothME\olr^2}{2} \normb{\gradmcost{\inode}{\mgain{\inode}{\oc,0}}}^2 \\
 &\hspace{0.4cm}= \pbrack{\frac{\smoothME\olr^2}{2}-\olr} \normb{\gradmcost{\inode}{\mgain{\inode}{\oc,0}}}^2 \\
 &\hspace{0.4cm}\overset{(c)}{\leq} -\frac{\olr}{2} \normb{\gradmcost{\inode}{\mgain{\inode}{\oc,0}}}^2 \\
 &\hspace{0.4cm}\overset{(d)}{\leq} -\frac{\olr}{2} \pbrack{\frac{2\reg}{\graddom\reg+1}} \pbrack{\mcost{\inode}{\mgain{\inode}{\oc,0}} - \mcost{\inode}{\mgain{\reg,\inode}{*}}},
\end{align*}
where (a) follows from Lemma~\ref{lem:smooth}, (b) is due to Step 6 of the MEMLQR algorithm, for (c) we assume that $\olr\leq {1}/{\smoothME}$ and (d) is the result of Remark~\ref{lem:megraddom}. By adding $\mcost{\inode}{\mgain{\inode}{0,0}} - \mcost{\inode}{\mgain{\reg,\inode}{*}}$ to both sides, we get,
\begin{align*}
\mcost{\inode}{\mgain{\inode}{\oc,1}} &- \mcost{\inode}{\mgain{\reg,\inode}{*}} \\
&\leq  \pbrack{1-\frac{\olr\reg}{\graddom\reg+1}} \pbrack{\mcost{\inode}{\mgain{\inode}{\oc,0}} - \mcost{\inode}{\mgain{\reg,\inode}{*}}}
\end{align*}
implying that, $\mcost{\inode}{\mgain{\inode}{\oc,1}}\leq\mcost{\inode}{\mgain{\inode}{\oc,0}}$. Now, by telescoping the above arguments over all the iterates in the inner loop, we obtain $\mcost{\inode}{\mgain{\inode}{\oc,\tinner}}\leq\mcost{\inode}{\gain{\oc}}$.

Now, for an iterate $\mgain{\inode}{\oc,\ic}$, from the Step 5 of the MEMLQR algorithm, we know that,
\begin{align*}
\mcost{\inode}{\mgain{\inode}{\oc,\ic}} = \kcost{\inode}{\mcgain{\inode}{\oc}{\ic}} + \frac{\reg}{2} \normb{\mgain{\inode}{\oc,\ic}-\mcgain{\inode}{\oc}{\ic}}^2,
\end{align*}
which implies that, $\kcost{\inode}{\mcgain{\inode}{\oc}{\ic}}\leq \kcost{\inode}{\mgain{\inode}{\oc,\ic}}$.
Now, by Step 6 in the MEMLQR algorithm,
\begin{align*}
\mgain{\inode}{\oc,\ic+1}=\pbrack{1-\olr \reg}\mgain{\inode}{\oc,\ic}+\olr \reg \mcgain{\inode}{\oc}{\ic},
\end{align*}
therefore, we get
\begin{align*}
 \kcost{\inode}{\mcgain{\inode}{\oc}{\ic}}\leq \kcost{\inode}{\mgain{\inode}{\oc,\ic+1}} \leq  \kcost{\inode}{\mgain{\inode}{\oc,\ic}}.   
\end{align*}
Finally, extending the above arguments to all the iterates in the inner loop, we get $\kcost{\inode}{\mgain{\inode}{\oc,\tinner}}\leq\kcost{\inode}{\mgain{\inode}{\oc,0}}<\infty$.
\end{proof}}

Next, we present a corollary that establishes that the policy generated by local iterations for a system realization, when used for another system realization, incurs a finite cost.
\begin{corollary}
\longthmtitle{Finite cost by local policies}
\label{cor:stable}
Let the conditions in Proposition~\ref{prop:locstability} be satisfied. Then, consider the local estimate $\mgain{\inode}{\oc,\tinner}$ at a system realization $\inode\in\nodes$ for an outer iteration $\oc$ of Algorithm~\ref{algo:dis_com}, it follows that $\kcost{\jnode}{\mgain{\inode}{\oc,\tinner}}<\infty$ for all $\jnode\in\nodes$.
\end{corollary}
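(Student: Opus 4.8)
\emph{Proof plan.} The idea is to upgrade the stability of $\mgain{\inode}{\oc,\tinner}$ for its \emph{own} realization, guaranteed by Proposition~\ref{prop:locstability}, to stability for \emph{every} realization $\jnode\in\nodes$ by a perturbation argument: a feedback gain that stabilizes $(\matA{\inode},\matB{\inode})$ with a quantifiable margin, and whose norm is under control, also stabilizes any realization that is close enough to $(\matA{\inode},\matB{\inode})$ in operator norm; and all realizations in $\nodes$ are close in this sense because the uncertainty parameters are bounded. Equivalently, one can phrase it as: the inner loop keeps the iterate in a fixed sublevel set of $\kcost{\inode}{\cdot}$, and that sublevel set is contained in $\cap_{\jnode\in\nodes}\stable{\jnode}$ under bounded heterogeneity.

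First I would invoke Proposition~\ref{prop:locstability}: $\kcost{\inode}{\mgain{\inode}{\oc,\tinner}}\le\kcost{\inode}{\gain{\oc}}<\infty$, so $\matA{\inode}+\matB{\inode}\mgain{\inode}{\oc,\tinner}$ is Schur. The same finite-cost bound controls both the gain and its Lyapunov certificate: by the standard coercivity of the LQR cost one gets $\normb{\mgain{\inode}{\oc,\tinner}}\le R_\inode$, and from the Lyapunov equation defining $\matP{\cdot}$ together with $\kcost{\inode}{\mgain{\inode}{\oc,\tinner}}=\mathbb{E}_{\dv{\inode}{0}\sim\mathcal{D}_i}[{\dv{\inode}{0}}^{\top}\matP{\mgain{\inode}{\oc,\tinner}}\dv{\inode}{0}]$ one gets $\normb{\matP{\mgain{\inode}{\oc,\tinner}}}\le\Pi_\inode$, with $R_\inode,\Pi_\inode$ depending only on $\kcost{\inode}{\gain{\oc}}$ and the data $\matQ,\matR{},\mathcal{D}_i$. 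A bound on the solution of the discrete Lyapunov inequality is exactly a quadratic stability margin: there is $\beta_\inode>0$, decreasing in $R_\inode$ and $\Pi_\inode$, such that every matrix within operator-norm distance $\beta_\inode$ of $\matA{\inode}+\matB{\inode}\mgain{\inode}{\oc,\tinner}$ is Schur.

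Next, for the fixed gain $K=\mgain{\inode}{\oc,\tinner}$ one has $(\matA{\jnode}+\matB{\jnode}K)-(\matA{\inode}+\matB{\inode}K)=(\matA{\jnode}-\matA{\inode})+(\matB{\jnode}-\matB{\inode})K$; since the parameters $\unA{},\unB{}$ generating the realizations in $\nodes$ are bounded, $\normb{\matA{\jnode}-\matA{\inode}}$ and $\normb{\matB{\jnode}-\matB{\inode}}$ are bounded by constants determined by those bounds and by the fixed matrices $\ematA{1},\dots,\ematA{\tunA},\ematB{1},\dots,\ematB{\tunB}$; combined with $\normb{K}\le R_\inode$, the mismatch has norm at most some $\eta$ depending only on the uncertainty set and on $\kcost{\inode}{\gain{\oc}}$. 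When $\eta\le\beta_\inode$, the previous paragraph gives that $\matA{\jnode}+\matB{\jnode}\mgain{\inode}{\oc,\tinner}$ is Schur, i.e.\ $\mgain{\inode}{\oc,\tinner}\in\stable{\jnode}$ and hence $\kcost{\jnode}{\mgain{\inode}{\oc,\tinner}}<\infty$; as $\jnode\in\nodes$ was arbitrary, this proves the corollary.

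The main obstacle is the inequality $\eta\le\beta_\inode$: the stability margin extracted from the $\kcost{\inode}$-bound must dominate the realization-to-realization mismatch. This forces one to track two quantities in tandem: (i) $\kcost{\inode}{\gain{\oc}}$ must stay bounded along the algorithm -- it is bounded by $C(\gain{0})<\infty$ under the invariant carried through the main convergence proof (Proposition~\ref{prop:locstability} gives monotone decrease inside the inner loop, and the averaging step is handled in the $\reg$-large regime) -- so that $\beta_\inode$ does not degenerate; and (ii) $\eta$ must be controlled through the boundedness of the uncertainty set, which is exactly the structural fact underlying Assumption~\ref{assump:diversity}. Pinning down the correct dependence of $\beta_\inode$ on $(R_\inode,\Pi_\inode)$ from a discrete Lyapunov inequality and matching it against $\eta$ is the only delicate point; everything else is bookkeeping.
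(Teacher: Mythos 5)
Your proposal takes a genuinely different route from the paper, and as written it does not close. The paper's proof is three lines and never touches the system matrices: Proposition~\ref{prop:locstability} gives $\kcost{\inode}{\mgain{\inode}{\oc,\tinner}}<\infty$, Assumption~\ref{assump:diversity} bounds the deviation of each realization's gradient $\gradkcost{\jnode}{\cdot}$ from the average gradient at that policy, and the PL inequality~\eqref{eq:graddom} converts a finite gradient norm for realization $\jnode$ into a finite suboptimality gap, hence $\kcost{\jnode}{\mgain{\inode}{\oc,\tinner}}<\infty$. All of the ``closeness of realizations'' is thus packaged into the gradient-heterogeneity bound rather than into an operator-norm bound on $\matA{\jnode}-\matA{\inode}$ and $\matB{\jnode}-\matB{\inode}$. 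Your robustness-margin argument is more explicit and, if completed, would be more informative --- it would produce an actual Schur certificate for $\matA{\jnode}+\matB{\jnode}\mgain{\inode}{\oc,\tinner}$ rather than an indirect finiteness claim --- but it rests on a different sufficient condition than the one the paper actually assumes.

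The genuine gap is the step you yourself flag and then defer: $\eta\le\beta_\inode$. Boundedness of the uncertainty parameters gives some finite $\eta$, but nothing in Assumptions~\ref{assump:persistence}--\ref{assump:diversity} makes $\eta$ small relative to the margin $\beta_\inode$ extracted from the Lyapunov certificate; for sufficiently separated realizations a gain stabilizing only realization $\inode$ simply fails to stabilize realization $\jnode$, so this inequality cannot be ``bookkeeping'' --- it is an additional quantitative hypothesis that must be stated. Assumption~\ref{assump:diversity} is not that hypothesis: it constrains gradients, and it is only asserted for policies $\mgain{}$ with $\kcost{}{\mgain{}}<\infty$, i.e.\ policies already stabilizing every realization. (This is also the soft spot in the paper's own proof: invoking Assumption~\ref{assump:diversity} at $\mgain{\inode}{\oc,\tinner}$ presupposes $\mgain{\inode}{\oc,\tinner}\in\cap_{\jnode\in\nodes}\stable{\jnode}$, which is precisely the claim being proved, so the paper implicitly extends the assumption beyond its stated domain.) To make your route rigorous you would need to postulate $\eta\le\beta_\inode$ explicitly --- or a uniform version of it over the sublevel set $\{\mgain{}:\kcost{\inode}{\mgain{}}\le C(\gain{0})\}$ --- at which point the corollary follows from your perturbation lemma; as the proposal stands, the conclusion is asserted under hypotheses that do not imply it.
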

\excludethis{
\begin{proof}
By Proposition~\ref{prop:locstability}, $\kcost{\inode}{\mgain{\inode}{\oc,\tinner}}<\infty$ and the diversity in the gradients for different realizations was bounded in Assumption~\ref{assump:diversity}. By combining these statements with \eqref{eq:graddom}, we can infer that $\kcost{\jnode}{\mgain{\inode}{\oc,\tinner}}<\infty$.
\end{proof}}

Algorithm~\ref{algo:dis_com} depends on the gradient calculation at the current iterate, which requires the cost incurred at the iterate to be finite for all the available system realizations. The following result establishes this for the policies Algorithm~\ref{algo:dis_com} generates at every outer iteration.
\begin{corollary}
\longthmtitle{Finite cost at all iterations}
\label{cor:allstable}
Let the conditions in Proposition~\ref{prop:locstability} be satisfied. Then, for any iterate $\gain{\oc}$ generated by Algorithm~\ref{algo:dis_com}, $\kcost{\inode}{\gain{\oc}}<\infty$ for all $\inode\in\nodes$.
\end{corollary}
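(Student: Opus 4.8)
The plan is to prove the statement by induction on the outer iteration index $\oc$, since Proposition~\ref{prop:locstability} and Corollary~\ref{cor:stable} already dispose of one outer iteration once the incoming iterate is stabilizing. For the base case, Assumption~\ref{assump:cost} states $C(\gain{0})<\infty$, which gives $\kcost{\inode}{\gain{0}}<\infty$ for every $\inode\in\nodes$. For the inductive step, suppose $\kcost{\inode}{\gain{\oc}}<\infty$ for all $\inode\in\nodes$. Then all hypotheses of Proposition~\ref{prop:locstability} hold at outer iteration $\oc$, so each local estimate obeys $\kcost{\inode}{\mgain{\inode}{\oc,\tinner}}<\infty$, and Corollary~\ref{cor:stable} upgrades this to $\kcost{\jnode}{\mgain{\inode}{\oc,\tinner}}<\infty$ for \emph{all} $\inode,\jnode\in\nodes$. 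Hence $\gain{\oc}$ and every local estimate lie in the open set $\cap\stable{\jnode}$, and the meta-update of Algorithm~\ref{algo:dis_com} can be written either as the convex combination $\gain{\oc+1}=(1-\mlr)\gain{\oc}+\frac{\mlr}{\tnodes}\sum_{\inode\in\nodes}\mgain{\inode}{\oc,\tinner}$ with $\mlr\le 1$, or, by~\eqref{eq:outerdescent}, as $\gain{\oc+1}=\gain{\oc}-\vlr\gradgk{\oc}$.

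The main obstacle is that $\cap\stable{\jnode}$ is \emph{not} convex, so finiteness of $\kcost{\jnode}{\gain{\oc+1}}$ cannot be read off from finiteness at the averaged policies; it has to be deduced from the outer step being short. I would bound that step as follows. The proof of Proposition~\ref{prop:locstability} shows that $\kcost{\inode}{\mgain{\inode}{\oc,\ic}}$ is nonincreasing in $\ic$, so every inner iterate satisfies $\kcost{\inode}{\mgain{\inode}{\oc,\ic}}\le\kcost{\inode}{\gain{\oc}}$ and thus lies in $\stable{\inode}$. For any $K\in\stable{\inode}$, optimality of the minimizer in~\eqref{eq:inner} gives $\frac{\reg}{2}\normb{K-\dmcgain_{\inode}[K]}^2\le\kcost{\inode}{K}$, hence $\normb{\gradmcost{\inode}{K}}\le\sqrt{2\reg\,\kcost{\inode}{\gain{\oc}}}$; combined with the $\biopt$-accuracy of the inner solve this yields $\normb{\gradgm{\inode}{\oc}{\ic}}\le G_\oc:=\max_{\inode\in\nodes}\sqrt{2\reg\,\kcost{\inode}{\gain{\oc}}}+\reg\biopt<\infty$. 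Since $\gradgk{\oc}$ is an average of such terms, $\normb{\gain{\oc+1}-\gain{\oc}}=\vlr\normb{\gradgk{\oc}}\le\vlr G_\oc$.

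To close the induction I would invoke the standard LQR fact — the same compact-sublevel-set / continuity-of-the-spectral-radius argument behind Lemma~\ref{lem:smooth} and~\cite[Lemma 25]{MF-RG-SK-MM:18} — that around any policy $\gain{\oc}$ with $\kcost{\jnode}{\gain{\oc}}<\infty$ there is a ball of some radius $\rho_\jnode>0$ still contained in $\stable{\jnode}$ on which the cost is finite. Provided the step-sizes are small enough that $\vlr G_\oc\le\min_{\jnode\in\nodes}\rho_\jnode$ — a smallness requirement on $\vlr$ consistent with the conditions already imposed (cf. Lemma~\ref{lem:drift}) — we conclude $\gain{\oc+1}\in\cap\stable{\jnode}$ with $\kcost{\jnode}{\gain{\oc+1}}<\infty$, which is the inductive step; iterating it for $\oc=0,1,\dots,\ochorizon-1$ gives the claim. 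Note that $G_\oc$ and $\rho_\jnode$ may vary with $\oc$, which is harmless: only finiteness, not a uniform bound, is asserted, and there are only finitely many iterations.
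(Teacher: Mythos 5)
Your route is genuinely different from the paper's. The paper disposes of this corollary in one line, citing Assumption~\ref{assump:cost}, the PL inequality~\eqref{eq:graddom}, and Corollary~\ref{cor:stable}; it never makes the induction explicit and, more importantly, never addresses how the averaging step $\gain{\oc+1}=(1-\mlr)\gain{\oc}+\frac{\mlr}{\tnodes}\sum_{\inode\in\nodes}\mgain{\inode}{\oc,\tinner}$ lands back in $\cap_{\jnode}\stable{\jnode}$ when that set is not convex. You correctly identify this as the crux, and your mechanism for closing it --- bound $\normb{\gain{\oc+1}-\gain{\oc}}$ via the prox inequality $\frac{\reg}{2}\normb{K-\dmcgain_{\inode}[K]}^2\leq\kcost{\inode}{K}$ and then appeal to the openness of the stabilizing set through the local radius $\gaindiv$ of Lemma~\ref{lem:smooth} --- is sound in each individual ingredient and is more informative than what the paper offers.

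The remaining gap is the step-size condition $\vlr G_\oc\leq\min_{\jnode}\rho_{\jnode}$, which you assert is ``consistent with the conditions already imposed'' but which does not actually follow from them. The radius supplied by \cite[Lemma 25]{MF-RG-SK-MM:18} scales like $1/\kcost{\jnode}{\gain{\oc}}$ while your bound $G_\oc$ grows like $\sqrt{\kcost{\inode}{\gain{\oc}}}$, so verifying the condition for the \emph{fixed} step sizes chosen before the algorithm runs requires an a priori uniform upper bound on $\kcost{\jnode}{\gain{\oc}}$ along the whole trajectory --- which is essentially the statement being proved, so the induction as written is circular at this point. To close it you would need either a per-iteration cost-increase bound (e.g., showing $\kcost{\jnode}{\gain{\oc+1}}$ exceeds $\kcost{\jnode}{\gain{\oc}}$ by at most a controlled amount, so that sublevel sets over $\ochorizon$ iterations are uniformly bounded) or an adaptive step-size rule. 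To be fair, the paper's own one-line proof does not resolve this either --- the PL inequality and Corollary~\ref{cor:stable} give finiteness at the local estimates, not at their non-convex combination --- so your proposal isolates a genuine difficulty rather than introducing a new one; it just does not yet finish the job.
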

\excludethis{
\begin{proof}
The statement follows from Assumption~\ref{assump:cost}, \eqref{eq:graddom}, and Corollary~\ref{cor:stable}.
\end{proof}}

\begin{remark}
    Proposition~\ref{prop:locstability}, Corollary~\ref{cor:stable}, and Corollary~\ref{cor:allstable} are analogous to the stability results in \cite[Theorem 1, Theorem 2]{LFT-DZ-JA-HW:24}. Through these, we show that the cost incurred by policies generated inside Algorithm~\ref{algo:dis_com} remains finite for all available system realizations.
\end{remark}

The next theorem presents our main result; it shows the convergence of the iterates produced by Algorithm~\ref{algo:dis_com} to an approximate first-order stationary point of~\eqref{eq:melqr}. 
\begin{theorem}
\longthmtitle{Convergence to a first-order stationary point}
\label{thm:convergence}
Let Assumptions~\ref{assump:persistence},~\ref{assump:cost} and ~\ref{assump:diversity} hold, $\smooth$ and $\smoothME$ from Lemma~7 with $\reg >2\sqrt{2}\smooth$, $\ochorizon\geq 16\smoothME^2\pbrack{1+72\reg^2}^2$,  $\tinner\geq 1$, $\olr\leq{1}/({2\smoothME\tinner})$, $\mlr\geq\max\left\{0.5,{2\smoothME}/{\sqrt{\ochorizon}}\right\}$, and $\vlr={1}/{\sqrt{\ochorizon}}$. Then, the sequence $\left\{\gain{\oc}\right\}_{\oc=0}^{\ochorizon}$ generated by Algorithm~\ref{algo:dis_com} has the following property:
\begin{equation}
\frac{1}{4\ochorizon}\sum_{\oc=0}^{\ochorizon}\norm{\gradmcost{\gain{\oc}}}^2 \leq \frac{1}{\sqrt{\ochorizon}} \big(\mcost{\gain{0}} -\mcost{\gain{*}}\big) {+} \frac{\cons}{\ochorizon}  + \coni   
\end{equation}
where, $\coni = 2\reg^2\biopt^2$ and $\cons = 4\pbrack{\frac{2\reg^2\biopt^2}{\tinner}+\frac{6\reg^2}{\reg^2-8\smooth^2} \bgrad^2}$.
\end{theorem}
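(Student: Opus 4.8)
The plan is to run the standard descent argument for smooth non-convex optimization with an inexact gradient oracle, closing the recursion with the three structural lemmas above. First, Corollary~\ref{cor:allstable} guarantees $\kcost{\inode}{\gain{\oc}}<\infty$ for every $\inode\in\nodes$ and every outer iteration $\oc$, so along the whole trajectory $\{\gain{\oc}\}$ every gradient is well defined and the smoothness estimate~\eqref{eq:mecostsmooth}, the gradient-dominance bound of Lemma~\ref{lem:megraddom}, and the heterogeneity bound of Lemma~\ref{lem:diversity} all apply at each iterate. Since $\mcost{\cdot}=\sum_{\inode\in\nodes}\mcost{\inode}{\cdot}$, inequality~\eqref{eq:mecostsmooth} makes $\mcost{\cdot}$ smooth with a constant proportional to $\smoothME$; writing the outer update as in~\eqref{eq:outerdescent}, $\gain{\oc+1}=\gain{\oc}-\vlr\,\gradgk{\oc}$, the descent lemma yields
\begin{equation*}
\mcost{\gain{\oc+1}} \leq \mcost{\gain{\oc}} - \vlr\,\inprodb{\gradmcost{\gain{\oc}}}{\gradgk{\oc}} + \frac{\smoothME\vlr^2}{2}\,\normb{\gradgk{\oc}}^2 .
\end{equation*}

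Next I would introduce the error term $e_\oc := \gradgk{\oc}-\gradmcost{\gain{\oc}}$, the deviation of the actual search direction from the true meta-gradient at $\gain{\oc}$, split $-\inprodb{\gradmcost{\gain{\oc}}}{\gradgk{\oc}}=-\normb{\gradmcost{\gain{\oc}}}^2-\inprodb{\gradmcost{\gain{\oc}}}{e_\oc}$, bound the cross term with Young's inequality, and use $\normb{\gradgk{\oc}}^2\leq 2\normb{\gradmcost{\gain{\oc}}}^2+2\normb{e_\oc}^2$. This reduces the recursion to
\begin{equation*}
\mcost{\gain{\oc+1}} \leq \mcost{\gain{\oc}} - \vlr\pbrack{\frac12-c\,\smoothME\vlr}\normb{\gradmcost{\gain{\oc}}}^2 + \vlr\pbrack{\frac12+c\,\smoothME\vlr}\normb{e_\oc}^2
\end{equation*}
for an absolute constant $c$. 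The heart of the argument is controlling $\normb{e_\oc}^2$. By Jensen's inequality it is dominated by the cumulative local drift $\frac{1}{\tnodes\tinner}\sum_{\inode\in\nodes}\sum_{\ic}\normb{\gradgm{\inode}{\oc}{\ic}-\gradmcost{\inode}{\gain{\oc}}}^2$, so Lemma~\ref{lem:drift} bounds it by $2\reg^2\biopt^2$ plus a $\frac{\smoothME\vlr^2}{\mlr^2}$-weighted multiple of $\frac1{\tnodes}\sum_{\inode}\normb{\gradmcost{\inode}{\gain{\oc}}}^2$, and Lemma~\ref{lem:diversity} converts the latter into a small multiple of $\normb{\gradmcost{\gain{\oc}}}^2$ plus the $\bgrad^2$ offset. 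The outcome is a bound $\normb{e_\oc}^2\leq \alpha\,\normb{\gradmcost{\gain{\oc}}}^2+\beta$, where $\alpha=\bigO{\smoothME\vlr^2/\mlr^2}$ and $\beta$ collects the $2\reg^2\biopt^2$ term together with the $\frac{\smoothME\vlr^2}{\mlr^2}\big(\frac{2\reg^2\biopt^2}{\tinner}+\frac{6\reg^2}{\reg^2-8\smooth^2}\bgrad^2\big)$-type contributions that eventually become $\coni$ and $\cons/\ochorizon$.

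Substituting back, the coefficient of $\normb{\gradmcost{\gain{\oc}}}^2$ is $\vlr\big(\frac12-c\,\smoothME\vlr-\alpha(\frac12+c\,\smoothME\vlr)\big)$; the hypotheses $\vlr=1/\sqrt{\ochorizon}$, $\mlr\geq\frac12$, and $\ochorizon\geq 16\smoothME^2(1+72\reg^2)^2$ are precisely what force $\vlr$ (hence $\alpha$) small enough that this coefficient is at least $\vlr/4$. I would then sum the recursion over $\oc=0,\dots,\ochorizon$, telescope the $\mcost{\gain{\oc}}$ terms, use $\mcost{\gain{\ochorizon}}\geq\mcost{\gain{*}}$, divide by $\vlr\ochorizon$, and substitute $\vlr=1/\sqrt{\ochorizon}$: the telescoped part produces $\frac{1}{\sqrt{\ochorizon}}(\mcost{\gain{0}}-\mcost{\gain{*}})$, the $\vlr$-independent part of $\beta$ produces the constant $\coni=2\reg^2\biopt^2$, and the $\vlr^2/\mlr^2=\bigO{1/\ochorizon}$ part produces $\cons/\ochorizon$.

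The main obstacle is the constant bookkeeping in this last step: one must verify that under exactly the stated step-size and horizon conditions the coefficient of $\normb{\gradmcost{\gain{\oc}}}^2$ is not merely positive but at least $\vlr/4$ (to recover the $\frac{1}{4\ochorizon}$ prefactor), and that all $\vlr^2$-weighted error contributions aggregate to exactly $\cons=4\big(\frac{2\reg^2\biopt^2}{\tinner}+\frac{6\reg^2}{\reg^2-8\smooth^2}\bgrad^2\big)$. A secondary technicality is that Algorithm~\ref{algo:dis_com} uses the $\biopt$-accurate inner iterate $\bar{K}$ in place of the exact inner minimizer $\mcgain{\inode}{\oc}{\ic}$ defining $\gradgm{\inode}{\oc}{\ic}$; this slack is already absorbed into Lemma~\ref{lem:drift} and surfaces only through the $\reg^2\biopt^2$ terms. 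Throughout, the finiteness guarantees of Proposition~\ref{prop:locstability} and Corollary~\ref{cor:allstable} are what permit invoking the smoothness, gradient-dominance, and heterogeneity lemmas at each iterate.
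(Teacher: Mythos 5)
Your strategy is correct and mirrors the paper's own proof essentially step for step: the $\smoothME$-smoothness descent lemma applied to the outer update~\eqref{eq:outerdescent}, the split of the inner product and of $\normb{\gradgk{\oc}}^2$ via Young's and Jensen's inequalities into a $\normb{\gradmcost{\gain{\oc}}}^2$ part and an error part, the control of the error through Lemma~\ref{lem:drift} followed by Lemma~\ref{lem:diversity}, the verification that the horizon and step-size conditions force the descent coefficient to be at least $\vlr/4$, and the final telescoping with $\mcost{\gain{\ochorizon}}\geq\mcost{\gain{*}}$ and $\vlr=1/\sqrt{\ochorizon}$. The constant bookkeeping you defer is carried out in the paper exactly as you describe and yields the stated $\coni$ and $\cons$.
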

\excludethis{
\begin{proof}
Using the $\smoothME$-smoothness of the function $C_\lambda$,
\begin{align}
\label{eq:proof1}
&\mcost{\gain{\oc+1}} - \mcost{\gain{\oc}}\nonumber\\
&\hspace{0.5cm}\leq \inpr{\gradmcost{\gain{\oc}}}{\gain{\oc+1}-\gain{\oc}} + \frac{\smoothME}{2}\normb{\gain{\oc+1}-\gain{\oc}}^2 \nonumber\\
&\hspace{0.5cm}\overset{(a)}{=}  \underbrace{-\vlr\inpr{\gradmcost{\gain{\oc}}}{\gradgk{\oc}}}_{A} + \underbrace{\frac{\vlr^2\smoothME}{2} \normb{\gradgk{\oc}}^2}_{B}, 
\end{align}
where, (a) follows from~\eqref{eq:outerdescent}. We bound the term A in the above summation as,
\begin{align}
\label{eq:proof2}
&\inpr{\gradmcost{\gain{\oc}}}{\gradgk{\oc}} \nonumber \\
&\overset{(b)}{=} \normb{\gradmcost{\gain{\oc}}}^2 + \inpr{\gradmcost{\gain{\oc}}}{\gradgk{\oc}-\gradmcost{\gain{\oc}}} \nonumber\\
&\overset{(c)}{\geq} \normb{\gradmcost{\gain{\oc}}}^2 - \frac{1}{2}\normb{\gradmcost{\gain{\oc}}}^2 \nonumber \\
&\hspace{1cm}-\frac{1}{2}\normb{\gradgk{\oc} -\gradmcost{\gain{\oc}}}^2 \nonumber \\
&\overset{(d)}{=}\frac{1}{2}\normb{\gradmcost{\gain{\oc}}}^2\nonumber \\
&\hspace{1cm}-\frac{1}{2}\norm{\frac{1}{\tnodes\tinner}\sum_{\inode\in\nodes}\sum_{\ic=0}^{\tinner-1}\pbrack{\gradgm{\inode}{\oc}{\ic}-\gradmcost{\inode}{\gain{\oc}}}}^2
\end{align}
where, (b) and (c) follow from the triangle inequality and the properties of inner product, and (d) follows from~\eqref{eq:outerdescent} and definition of $\gradmcost{\inode}{\gain{\oc}}$. For the term B, we have
\begin{align}
\label{eq:proof3}
\normb{\gradgk{\oc}}^2 &= \normb{\gradgk{\oc}-\gradmcost{\gain{\oc}}+\gradmcost{\gain{\oc}}}^2 \nonumber \\
&\overset{(e)}{\leq} 2\normb{\gradgk{\oc}-\gradmcost{\gain{\oc}}}^2+2\normb{\gradmcost{\gain{\oc}}}^2 \nonumber \\
&\overset{(f)}{=} 2\norm{\frac{1}{\tnodes\tinner}\sum_{\inode\in\nodes}\sum_{\ic=0}^{\tinner-1}\pbrack{\gradgm{\inode}{\oc}{\ic}-\gradmcost{\inode}{\gain{\oc}}}}^2\nonumber\\
&\hspace{3.5cm}+2\normb{\gradmcost{\gain{\oc}}}^2
\end{align}
where, (e) follows from the Jensen's inequality and (f) follows from the relevant definitions. Using the bounds in~\eqref{eq:proof2},~\eqref{eq:proof3} and Jensen's inequality in~\eqref{eq:proof1} we obtain,
\begin{align*}
& \mcost{\gain{\oc+1}} - \mcost{\gain{\oc}} \\
&\hspace{0.5cm}\leq \frac{-\vlr(1-2\vlr\smoothME)}{2}\normb{\gradmcost{\gain{\oc}}}^2 \nonumber \\
&\hspace{1cm}+ \vlr\frac{1+2\vlr\smoothME}{2}\frac{1}{\tnodes\tinner} \sum_{\inode\in\nodes}\sum_{\ic=0}^{\tinner}\normb{\gradgm{\inode}{\oc}{\ic}-\gradmcost{\inode}{\gain{\oc}}}^2 \\
&\hspace{0.5cm}\overset{(g)}{\leq} \frac{-\vlr(1-2\vlr\smoothME)}{2}\normb{\gradmcost{\gain{\oc}}}^2 + \vlr\frac{1+2\vlr\smoothME}{2} 2\reg^2\biopt^2 \\
&\hspace{1mm}+ \vlr\frac{1+2\vlr\smoothME}{2} \frac{16\smoothME^2\vlr^2}{\mlr^2}\pbrack{\frac{2\reg^2\biopt^2}{\tinner} + \frac{3}{\tnodes} \sum_{\inode\in\nodes}\normb{\gradmcost{\inode}{\gain{\oc}}}^2},
\end{align*}
where, (g) is due to Lemma~\ref{lem:drift} where it was imposed that $\vlr^2\leq {\mlr^2}/({4\smoothME^2})$ implying $2\olr^2\smoothME^2\leq {1}/({2\tinner^2})$ as $\vlr=\olr\mlr\tinner$. Now, 
\begin{align*}
& \mcost{\gain{\oc+1}} - \mcost{\gain{\oc}} \\
&\overset{(h)}{\leq} -\vlr\frac{1-2\vlr\smoothME}{2}\normb{\gradmcost{\gain{\oc}}}^2 + \vlr\frac{1+2\vlr\smoothME}{2} 2\reg^2\biopt^2 \\
&\hspace{1cm}+ \vlr\frac{1+2\vlr\smoothME}{2} \frac{16\smoothME^2\vlr^2}{\mlr^2}\left(\frac{2\reg^2\biopt^2}{\tinner} +3\normb{\gradmcost{\gain{\oc}}}^2 \right.\\
&\hspace{2cm}+ \left.\frac{3}{\tnodes} \sum_{\inode\in\nodes}\normb{\gradmcost{\inode}{\gain{\oc}}-\gradmcost{\gain{\oc}}}^2\right) \\
&\overset{(i)}{\leq} -\vlr\frac{1-2\vlr\smoothME}{2}\normb{\gradmcost{\gain{\oc}}}^2 + \vlr\frac{1+2\vlr\smoothME}{2} 2\reg^2\biopt^2 \\
&\hspace{1cm}+\vlr \frac{1+2\vlr\smoothME}{2} \frac{16\smoothME^2\vlr^2}{\mlr^2}\left( \frac{2\reg^2\biopt^2}{\tinner} +\frac{6\reg^2}{\reg^2-8\smooth^2} \bgrad^2\right. \\
&\hspace{2cm}\left.+ \frac{3\reg^2}{\reg^2-8\smooth^2} \normb{\gradmcost{\gain{\oc}}}^2 \right) \\
&= \vlr \frac{1+2\vlr\smoothME}{2} \pbrack{2\reg^2\biopt^2 + \frac{16\smoothME^2\vlr^2}{\mlr^2}\pbrack{\frac{2\reg^2\biopt^2}{\tinner}+\frac{6\reg^2\bgrad^2}{\reg^2-8\smooth^2}}} \\
&\hspace{0.8cm}+ \frac{-\vlr}{2}\underbrace{\pbrack{1-2\vlr\smoothME - \pbrack{1+2\vlr\smoothME} \frac{16\smoothME^2\vlr^2}{\mlr^2} \frac{3\reg^2}{\reg^2-8\smooth^2}}}_{C}\\
&\hspace{6.5cm}. \normb{\gradmcost{\gain{\oc}}}^2,
\end{align*}
where, (h) is due to triangle inequality and (i) follows from Lemma~\ref{lem:diversity}. Now consider the term C above,
\begin{align*}
1-2\vlr\smoothME - &\pbrack{1+2\vlr\smoothME} \frac{16\smoothME^2\vlr^2}{\mlr^2} \frac{3\reg^2}{\reg^2-8\smooth^2} \\
&\overset{(j)}{\geq} 1-2\vlr\smoothME - \pbrack{1+2\vlr\smoothME} \frac{16\smoothME^2\vlr^2}{\mlr^2} 3\reg^2,\\
&\overset{(k)}{\geq} 1-2\vlr\smoothME\pbrack{1+72\reg^2}
\overset{(l)}{\geq} \frac{1}{2},
\end{align*}
where, (j) follows by assuming $\reg^2-8\smooth^2\geq 1$, (k) follows because from Lemma~\ref{lem:drift} we know that $\vlr\leq{\mlr}/({2\smoothME})$ and by assuming $\mlr\geq 0.5$, we have $1+2\vlr\smoothME\leq 1+\mlr \leq 3 \mlr$, and (l) follows by assuming $\vlr\leq {1}/({4\smoothME\pbrack{1+72\reg^2}})$. Then,
\begin{align*}
\mcost{\gain{\oc+1}} - &\mcost{\gain{\oc}} \\
&\leq 
-\frac{\vlr}{4}\normb{\gradmcost{\gain{\oc}}}^2 + \vlr 2\reg^2\biopt^2 \\ 
&\hspace{1.5cm}+ 4 \vlr^3 \pbrack{\frac{2\reg^2\biopt^2}{\tinner} +\frac{6\reg^2}{\reg^2-8\smooth^2} \bgrad^2} \\
&= -\frac{\vlr}{4}\normb{\gradmcost{\gain{\oc}}}^2 + \vlr^3 \cons + \vlr \coni, 
\end{align*}
with the constants defined as $\coni:=2\reg^2\biopt^2$ and $\cons:=4 \pbrack{\frac{2\reg^2\biopt^2}{\tinner}+\frac{6\reg^2}{\reg^2-8\smooth^2} \bgrad^2}$. Now, by rearranging and summing the above over all  $\oc\in\{0,1,\cdots,\ochorizon-1\}$, we get the following on taking the average,
\begin{align*}
 \frac{1}{4\ochorizon}\sum_{\oc=0}^{\ochorizon-1}\normb{\gradmcost{\gain{\oc}}}^2 &\leq \frac{1}{\vlr \ochorizon} \pbrack{\mcost{\gain{0}} - \mcost{\gain{\ochorizon}}} \\
 &\hspace{3cm}+ \vlr^2 \cons + \coni.
\end{align*}
Using the fact that $\mcost{\gain{\ochorizon}}\geq \mcost{\gain{*}}$ and by setting $\vlr = {1}/{\sqrt{\ochorizon}}$, for $\ochorizon\geq 16\smoothME^2\pbrack{1+72\reg^2}^2$, the bound is achieved.
\end{proof}}

The following corollary shows the iteration complexity of Algorithm~\ref{algo:dis_com} by quantifying the number of outer iterations $\ochorizon$ and the required inner accuracy $\biopt$ to reach a $\epsilon>0$ accuracy in the approximate first-order stationary point.
\begin{corollary}
\label{cor:convergence}
Let Assumptions~\ref{assump:persistence},~\ref{assump:cost}, and~\ref{assump:diversity} hold, and the parameters of Algorithm~\ref{algo:dis_com} are set as stated in Theorem~\ref{thm:convergence}.  Moreover, let $0<\epsilon\leq {1}/({256\smoothME^4(1+72\reg^2)^4})$, total number of iterations $\ochorizon = \bigO{{1}/{\epsilon^2}}$ and the required inner accuracy $\biopt = \bigO{\sqrt{\epsilon}}$. Then, the output by Algorithm~\ref{algo:dis_com} is an $\epsilon-$optimal first-order stationary point of~\eqref{eq:melqr}.
\end{corollary}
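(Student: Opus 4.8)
The plan is to substitute the parameter choices of Theorem~\ref{thm:convergence} into its conclusion and balance the three resulting error terms. Recall that, under the stated step sizes and for any $\ochorizon \ge 16\smoothME^2(1+72\reg^2)^2$, Theorem~\ref{thm:convergence} gives
\[
\frac{1}{4\ochorizon}\sum_{\oc=0}^{\ochorizon}\norm{\gradmcost{\gain{\oc}}}^2 \le \frac{\mcost{\gain{0}}-\mcost{\gain{*}}}{\sqrt{\ochorizon}} + \frac{\cons}{\ochorizon} + \coni,
\]
with $\coni = 2\reg^2\biopt^2$ and $\cons = 4\big(2\reg^2\biopt^2/\tinner + 6\reg^2\bgrad^2/(\reg^2-8\smooth^2)\big)$. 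Since the average over iterations dominates the minimum, it suffices to push the right-hand side below $\epsilon/4$, which I would do by making each of the three summands at most $\epsilon/12$.

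First I would fix the inner-loop accuracy so that $\coni = 2\reg^2\biopt^2 \le \epsilon/12$, i.e.\ $\biopt \le \sqrt{\epsilon/(24\reg^2)} = \bigO{\sqrt{\epsilon}}$. With $\biopt$ of this order, $\cons$ is bounded above by a quantity depending only on the fixed data $\reg,\smooth,\bgrad,\tinner$ and not on $\epsilon$. Next I would choose $\ochorizon$ large enough that $(\mcost{\gain{0}}-\mcost{\gain{*}})/\sqrt{\ochorizon} \le \epsilon/12$ and $\cons/\ochorizon \le \epsilon/12$: the first needs $\ochorizon$ of order $1/\epsilon^2$, the second only of order $1/\epsilon$ (as $\cons$ is a constant), so the first dominates and $\ochorizon = \bigO{1/\epsilon^2}$ suffices. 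One also checks that $\mlr = 1/2$ is admissible for such $\ochorizon$, since $2\smoothME/\sqrt{\ochorizon} \le 1/2$ then, and $\olr \le 1/(2\smoothME\tinner)$, $\vlr = 1/\sqrt{\ochorizon}$ remain consistent.

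The one prerequisite still to verify is the lower bound $\ochorizon \ge 16\smoothME^2(1+72\reg^2)^2$ required by Theorem~\ref{thm:convergence} (equivalently $\vlr = 1/\sqrt{\ochorizon} \le 1/(4\smoothME(1+72\reg^2))$), and this is exactly where the hypothesis $0 < \epsilon \le 1/(256\smoothME^4(1+72\reg^2)^4)$ is used: it forces $1/\epsilon^2 \ge \big(16\smoothME^2(1+72\reg^2)^2\big)^2 \ge 16\smoothME^2(1+72\reg^2)^2$, so the $\ochorizon = \bigO{1/\epsilon^2}$ chosen above meets the bound. Combining the three estimates yields $\frac{1}{4\ochorizon}\sum_{\oc=0}^{\ochorizon}\norm{\gradmcost{\gain{\oc}}}^2 \le \epsilon/4$, hence $\min_{0 \le \oc \le \ochorizon}\norm{\gradmcost{\gain{\oc}}}^2 \le \epsilon$, i.e.\ the iterate returned by Algorithm~\ref{algo:dis_com} (in the standard best-iterate reading of its output) is an $\epsilon$-approximate first-order stationary point of~\eqref{eq:melqr}. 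I expect the only real obstacle to be bookkeeping: tracking the chain of dependencies $\biopt(\epsilon) \to \cons(\biopt) \to \ochorizon(\epsilon,\cons)$ while keeping $\ochorizon$ compatible with \emph{every} hypothesis of Theorem~\ref{thm:convergence}, and the smallness condition on $\epsilon$ is precisely what reconciles these constraints.
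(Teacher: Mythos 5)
The paper states this corollary without an explicit proof, treating it as a direct consequence of Theorem~\ref{thm:convergence}; your substitution-and-balancing argument (making $\coni$, $\cons/\ochorizon$, and the $1/\sqrt{\ochorizon}$ term each $O(\epsilon)$, and using the smallness condition on $\epsilon$ to verify $\ochorizon\geq 16\smoothME^2(1+72\reg^2)^2$) is exactly the intended derivation. Your remark that the guarantee is really a best-iterate (or averaged) statement rather than one about the returned last iterate $\gain{\ochorizon}$ is a fair observation about a looseness already present in the paper's own phrasing, not a gap in your argument.
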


Corollary~\ref{cor:convergence} shows that as the required accuracy increases (i.e., $\epsilon$ decreases), there is a quadratic increase in the number of outer iterations required, and the required inner accuracy also increases as~$\sqrt{\epsilon}$. Note that the convergence result in \cite{LFT-DZ-JA-HW:24} is up to a non-vanishing constant ball around the optimal solution (\cite[Theorem~3, Theorem~4]{LFT-DZ-JA-HW:24}) while the precision of our method is arbitrary by the precision of the inner optimization problem.

\section{Numerical experiments}
\label{sec:simulations}
In this section, we present numerical experiments to illustrate the performance of the proposed MEMLQR algorithm. To this end, we divide our experiments into two parts; the first introduces a model for uncertain linear systems. We use this model to study the convergence properties of the MEMLQR algorithm and demonstrate adaptation to a system realization. In the second part, we compare our setup to that of~\cite{musavi2023convergence} and~\cite{LFT-DZ-JA-HW:24}. We also present the results of a numerical experiment conducted based on the example provided in~\cite{LFT-DZ-JA-HW:24}.

\subsection{Uncertain linear system}
\label{ssec:numerical}

We used the following model parameters for the experiments in~\eqref{eq:exp_linear}. The uncertainties in the system are $\unA{}\sim \unif{-1,1}^2$ and $\unB{}\sim\unif{-1,1}^2$, where $\unif{-1,1}$ represents a uniform distribution between $-1$ and $1$. The matrices generating the system are $\small\ematA{0}= \begin{bmatrix}
0.7 & -0.3 & 0.0 & 0.1 \\
0.5 & -0.4 & 0.3 & 0.0\\
0.0 & 0.4 & 0.2 & -0.1\\
0.2 & 0.0 & 0.4 & 0.6
\end{bmatrix}$, $\ematA{1}$ and $\ematA{2}$ are lower and upper triangular matrices, respectively, with all entries as 0.1 and
\begin{align*}
\small
\ematB{0} = \begin{bmatrix}
0.3 & 0.2 \\ 
0.1 & 0.5 \\ 
0.4 & 0.1 \\ 
0.0 & 0.1   
\end{bmatrix}, \ematB{1} = \begin{bmatrix}
0.1 & 0.0 \\ 
0.0 & 0.1 \\ 
0.1 & 0.0 \\ 
0.0 & 0.1   
\end{bmatrix},  \ematB{2} = \begin{bmatrix}
0.0 & 0.1 \\ 
0.1 & 0.0 \\ 
0.0 & 0.1 \\ 
0.1 & 0.0   
\end{bmatrix}.
\end{align*}

The initial state is given by $\dv{}{0}\sim\pbrack{\text{\emph{unif}}\pbrack{-10,10}}^4$ and the task is specified by a cost of form~\eqref{eq:cost} where the cost matrices are given by $\matQ = \text{\emph{diag}}\pbrack{1,2,3,4}$ and $\matR{} = \text{\emph{diag}}\pbrack{1,2}$.

Four system realizations were generated using the parameters provided above. The MEMLQR algorithm was implemented with $\ochorizon=300$, $\tinner=2$, $\olr = 0.1$, $\mlr=1$ and $\reg = 0.2$. To solve the optimization problem in Step 5, we use the model-based gradient descent algorithm from~\cite{MF-RG-SK-MM:18}. We show the convergence characteristics of the algorithm by plotting the Moreau regularized cost $(\mcost{K^s})$ against the number of outer iterations $(s)$ in Fig.~\ref{fig:convergence}. Note that we slightly abuse our notations here by representing the cost incurred while adapting to a new system realization $z$ as $C_{z}$. In Fig.~\ref{fig:accuracy} we plot the accuracy $\pbrack{1-{|C_{z}(K^N)-C_{z}(K^n)|}/{C_{z}(K^N)}}$ for the cost incurred while adapting to a new system, by initializing using the policy that minimizes the cumulative cost and the MEMLQR policy generated previously. It can be observed that the MEMLQR policy adapts faster to the optimal policy of the unseen realization $z$. We also plot in Fig.~\ref{fig:trajectories} the state and input trajectories of the unseen system realization $z$ for the policy generated when a policy gradient algorithm is initialized at the MEMLQR policy. 
\begin{figure*}[t!]
\centering
\begin{subfigure}[t]{0.32\textwidth}
\centering
\includegraphics[scale=0.23]{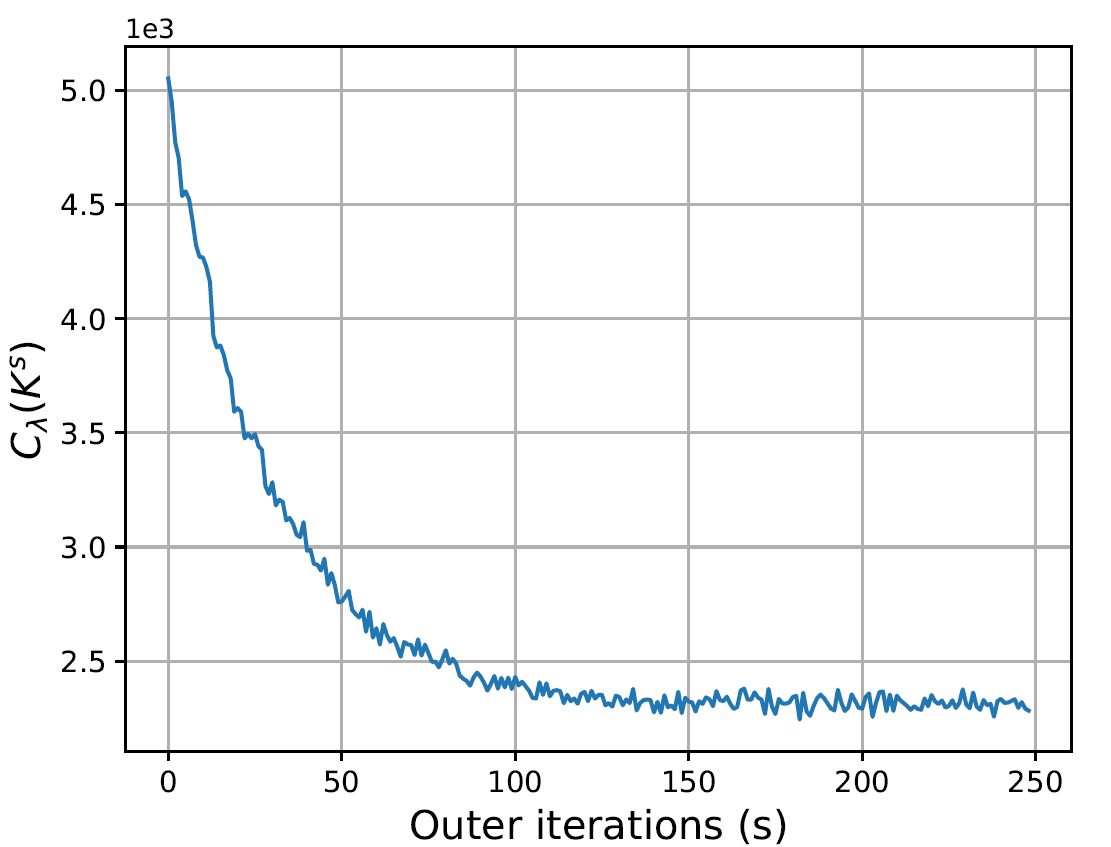}
\caption{}
\label{fig:convergence}
\end{subfigure}%
~ 
\begin{subfigure}[t]{0.32\textwidth}
\centering
\includegraphics[scale=0.41]{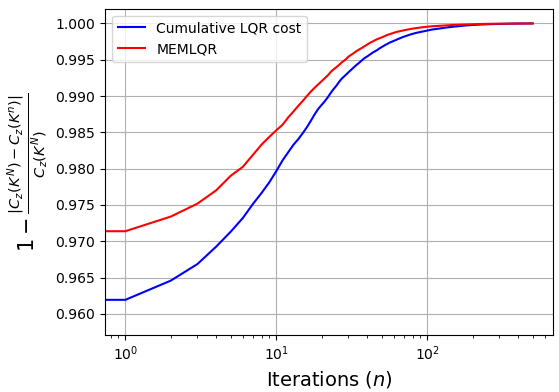}
\caption{}
\label{fig:accuracy}
\end{subfigure}
~
\begin{subfigure}[t]{0.32\textwidth}
\centering
\includegraphics[scale=0.18]{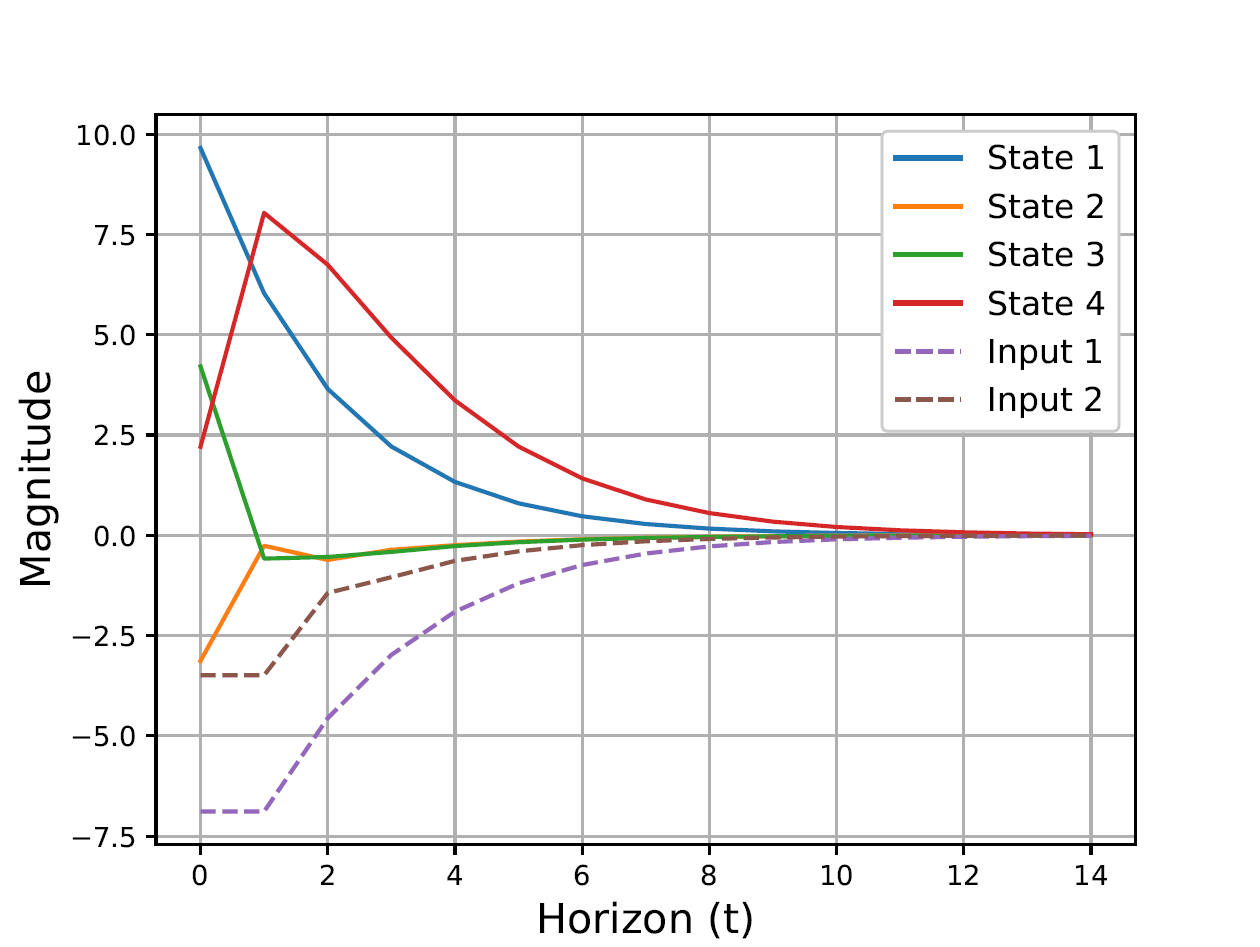}
\caption{}
\label{fig:trajectories}
\end{subfigure}
\caption{Depiction of properties of Algorithm~\ref{algo:dis_com}: (a) Convergence of the MEMLQR algorithm: evolution of the Moreau envelope regularized cost $C_\lambda(K^s)$ as the number of outer iterations $(s)$ increase. (b) The evolution of accuracy $\pbrack{1-{|C_{z}(K^N)-C_{z}(K^n)|}/{C_{z}(K^N)}}$ of the policy generated for an unseen realization $z$ by a model-based policy gradient framework when initialized using a policy obtained by minimizing the total LQR cost $C(\cdot)$ and by using the $K^S$ generated by the MEMLQR algorithm. Note that the cost calculation here is over 50 randomly initial states. (c) State and input trajectories of the unseen realization $z$ for the estimate of the optimal policy $(K^N)$ generated after $N=250$ iterations of the model-based policy gradient algorithm, which was initialized at $K^S$ generated by the MEMLQR algorithm.}
\label{fig:properties}
\end{figure*}

\subsection{Comparison to the MAML approach}
\label{ssec:comparison}

To empirically compare the performance of the MAML-based approach to that of ours, we borrowed the example presented in~\cite{LFT-DZ-JA-HW:24}. We generated ten different system realizations and obtained meta policies using the MAML-LQR algorithm from~\cite{LFT-DZ-JA-HW:24} and our MEMLQR algorithm with three different values, i.e., $\reg\in\setdef{0.02, 0.2, 2}$. These policies are evaluated by comparing the convergence characteristics while performing model-free policy gradient for three previously unseen system realizations. It can be observed from Fig.~\ref{fig:alltask} that at initialization, the MEMLQR policy has a lower cost than the MAML-LQR policy for all three realizations, thus leading to a faster improvement in the cost incurred.

\begin{figure*}[t!]
\centering
\begin{subfigure}[t]{0.32\textwidth}
\centering
\includegraphics[scale=0.25]{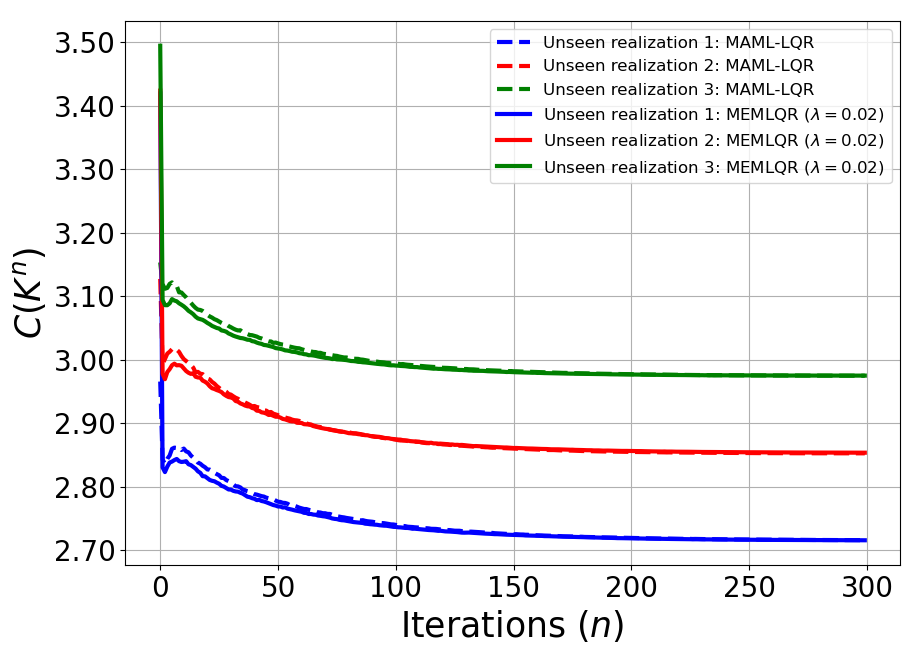}
\caption{}
\label{fig:melqr002}
\end{subfigure}%
~ 
\begin{subfigure}[t]{0.32\textwidth}
\centering
\includegraphics[scale=0.25]{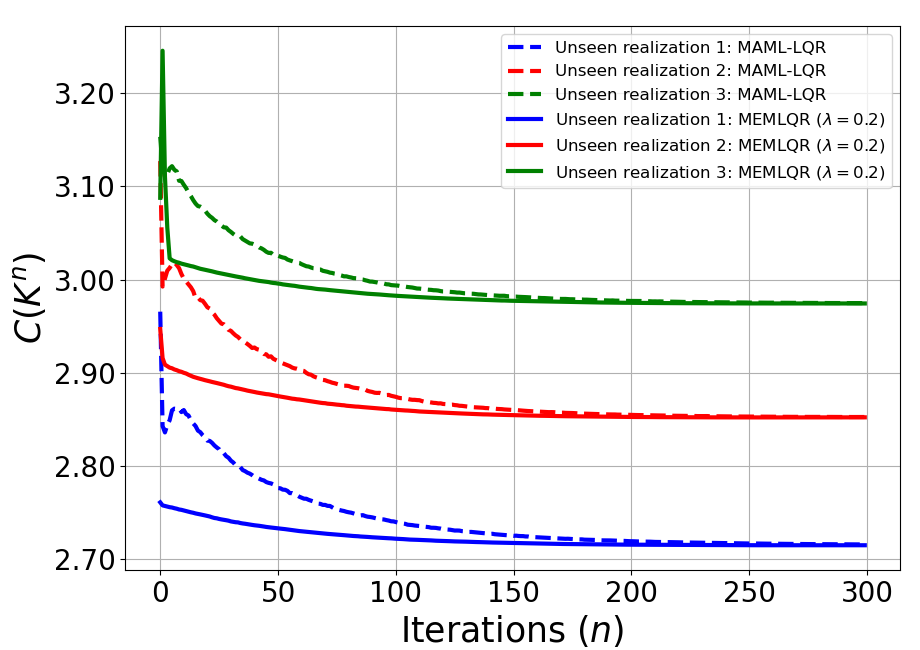}
\caption{}
\label{fig:melqr02}
\end{subfigure}
~
\begin{subfigure}[t]{0.32\textwidth}
\centering
\includegraphics[scale=0.25]{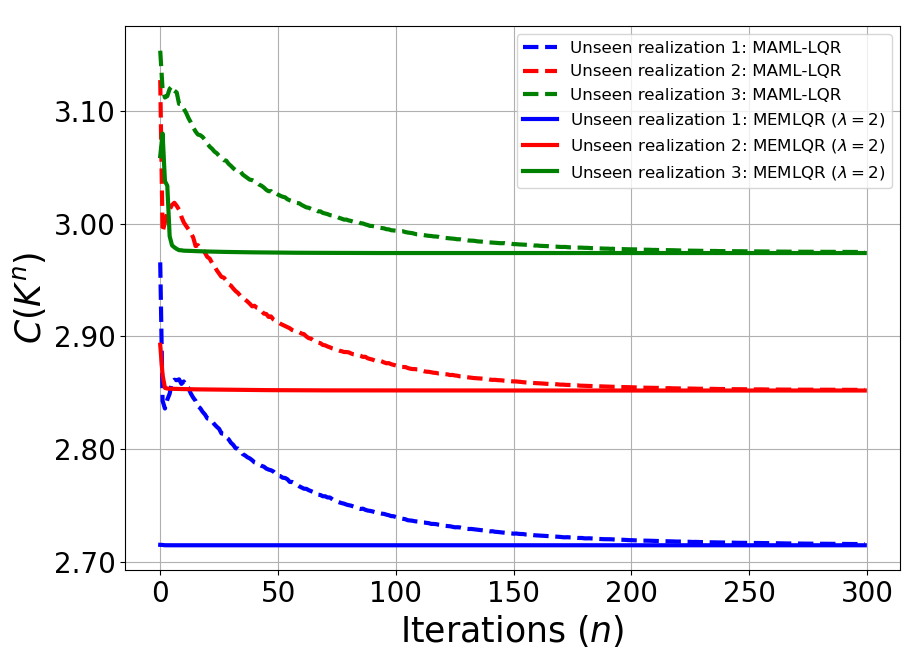}
\caption{}
\label{fig:melqr2}
\end{subfigure}
\caption{ Convergence to the optimal policy after initialization using the policy generated by MAML-LQR and MEMLQR ($\reg = 0.02,0.2,2$) for three random system realizations in a model-free setting. It can be observed that the cost incurred by policy generated by the MEMLQR approach is closer to the optimal value initially, thus aiding in faster convergence. Also, it may be noted that for higher values of $\reg$, the cost incurred is lower.}
\label{fig:alltask}
\end{figure*}

%

\section{Conclusion}
\label{sec:conclusions}
In this article, we explored personalization using Moreau envelopes to obtain an initialization for policy gradient algorithms (model-based/free) when dealing with uncertain linear systems, given that there is access to a finite number of uncertainty realizations. An iterative first-order framework, Algorithm~\ref{algo:dis_com}, was presented to optimize this personalized cost function in a client-server setup, and its analysis showed the algorithm's convergence to a first-order stationary point. Numerical experiments were provided to empirically show the advantage of designing and using the proposed framework's meta-policy rather than the total cost. We compared the results of our approach to that of an approach based on the MAML framework and empirically showed that our approach incurred a lower cost for unseen scenarios. 

\bibliographystyle{ieeetr}
\bibliography{refs}

\begin{thebibliography}{10}

\bibitem{bertsekas2012dynamic}
D.~Bertsekas, {\em Dynamic programming and optimal control: Volume I}, vol.~4.
\newblock Athena scientific, 2012.

\bibitem{khalil1996robust}
I.~S. Khalil, J.~C. Doyle, and K.~Glover, {\em Robust and optimal control}.
\newblock Prentice hall, 1996.

\bibitem{MF-RG-SK-MM:18}
M.~Fazel, R.~Ge, S.~Kakade, and M.~Mesbahi, ``Global convergence of policy gradient methods for the linear quadratic regulator,'' in {\em Proceedings of the 35th International Conference on Machine Learning}, vol.~80, pp.~1467--1476, PMLR, 2018.

\bibitem{BG-PME-TS:20}
B.~Gravell, P.~M. Esfahani, and T.~Summers, ``Learning optimal controllers for linear systems with multiplicative noise via policy gradient,'' {\em IEEE Transactions on Automatic Control}, vol.~66, no.~11, pp.~5283--5298, 2020.

\bibitem{BP-ZPJ:21}
B.~Pang and Z.~P. Jiang, ``Robust reinforcement learning: A case study in linear quadratic regulation,'' in {\em Proceedings of the AAAI Conference on Artificial Intelligence}, vol.~35, pp.~9303--9311, 2021.

\bibitem{GJ-HB-JG-AC-PKS:21}
G.~Jing, H.~Bai, J.~George, A.~Chakrabortty, and P.~K. Sharma, ``Learning distributed stabilizing controllers for multi-agent systems,'' {\em IEEE Control Systems Letters}, vol.~6, pp.~301--306, 2021.

\bibitem{HW-LFT-AM-JA:23}
H.~Wang, L.~F. Toso, A.~Mitra, and J.~Anderson, ``Model-free learning with heterogeneous dynamical systems: A federated {LQR} approach,'' {\em arXiv preprint arXiv:{2308.11743}}, 2023.

\bibitem{giegrich2024convergence}
M.~Giegrich, C.~Reisinger, and Y.~Zhang, ``Convergence of policy gradient methods for finite-horizon exploratory linear-quadratic control problems,'' {\em SIAM Journal on Control and Optimization}, vol.~62, no.~2, pp.~1060--1092, 2024.

\bibitem{sforni2024stability}
L.~Sforni, G.~Carnevale, I.~Notarnicola, and G.~Notarstefano, ``Stability-certified on-policy data-driven lqr via recursive learning and policy gradient,'' {\em arXiv preprint arXiv:2403.05367}, 2024.

\bibitem{CF-PA-SL:17}
C.~Finn, P.~Abbeel, and S.~Levine, ``Model-agnostic meta-learning for fast adaptation of deep networks,'' in {\em International conference on machine learning}, pp.~1126--1135, PMLR, 2017.

\bibitem{fallah2020provably}
A.~Fallah, A.~Mokhtari, and A.~Ozdaglar, ``Provably convergent policy gradient methods for model-agnostic meta-reinforcement learning,'' {\em arXiv preprint arXiv:2002.05135}, 2020.

\bibitem{clavera2018model}
I.~Clavera, J.~Rothfuss, J.~Schulman, Y.~Fujita, T.~Asfour, and P.~Abbeel, ``Model-based reinforcement learning via meta-policy optimization,'' in {\em Conference on Robot Learning}, pp.~617--629, PMLR, 2018.

\bibitem{AN-IC-SL-RSF-PA-SL-CF:19}
A.~Nagabandi, I.~Clavera, S.~Liu, R.~S. Fearing, P.~Abbeel, S.~Levine, and C.~Finn, ``Learning to adapt in dynamic, real-world environments through meta-reinforcement learning,'' {\em arXiv preprint arxiv:{1803.11347}}, 2019.

\bibitem{fallah2021convergence}
A.~Fallah, K.~Georgiev, A.~Mokhtari, and A.~Ozdaglar, ``On the convergence theory of debiased model-agnostic meta-reinforcement learning,'' {\em Advances in Neural Information Processing Systems}, vol.~34, pp.~3096--3107, 2021.

\bibitem{toghani2022pars}
M.~T. Toghani, S.~Lee, and C.~A. Uribe, ``{PARS-Push: Personalized, asynchronous and robust decentralized optimization},'' {\em IEEE Control Systems Letters}, vol.~7, pp.~361--366, 2022.

\bibitem{JB-RV-EZ-ZX-LZ-CF-SW:24}
J.~Beck, R.~Vuorio, E.~Z. Liu, Z.~Xiong, L.~Zintgraf, C.~Finn, and S.~Whiteson, ``A survey of meta-reinforcement learning,'' {\em arXiv preprint arxiv:{2301.08028}}, 2024.

\bibitem{MTT-SL-CAU:23}
M.~T. Toghani, S.~Lee, and C.~A. Uribe, ``{PersA-FL: personalized asynchronous federated learning},'' {\em Optimization Methods and Software}, pp.~1--38, 2023.

\bibitem{toghani2023first}
M.~T. Toghani, S.~P. Salazar, and C.~A. Uribe, ``{On First-Order Meta-Reinforcement Learning with Moreau Envelopes},'' in {\em 2023 62nd IEEE Conference on Decision and Control (CDC)}, pp.~4176--4181, 2023.

\bibitem{CTD-NT-JN:20}
C.~T. Dinh, N.~Tran, and J.~Nguyen, ``Personalized federated learning with {M}oreau envelopes,'' {\em Advances in Neural Information Processing Systems}, vol.~33, pp.~21394--21405, 2020.

\bibitem{musavi2023convergence}
N.~Musavi and G.~E. Dullerud, ``Convergence of gradient-based {MAML} in {LQR},'' in {\em 2023 62nd IEEE Conference on Decision and Control (CDC)}, pp.~7362--7366, IEEE, 2023.

\bibitem{LFT-DZ-JA-HW:24}
L.~F. Toso, D.~Zhan, J.~Anderson, and H.~Wang, ``Meta-learning linear quadratic regulators: A policy gradient {MAML} approach for the model-free {LQR},'' {\em arXiv preprint arXiv:{2401.14534}}, 2024.

\bibitem{molybog2021does}
I.~Molybog and J.~Lavaei, ``When does {MAML} objective have benign landscape?,'' in {\em 2021 IEEE Conference on Control Technology and Applications (CCTA)}, pp.~220--227, IEEE, 2021.

\bibitem{IRP-CVH:86}
I.~R. Petersen and C.~V. Hollot, ``A {R}iccati equation approach to the stabilization of uncertain linear systems,'' {\em Automatica}, vol.~22, no.~4, pp.~397--411, 1986.

\bibitem{AA-MTT-CAU:24}
A.~Aravind, M.~T. Toghani, and C.~A. Uribe, ``A {M}oreau envelope approach for {LQR} meta-policy estimation,'' {\em arXiv preprint arXiv:{2403.17364}}, 2024.

\bibitem{VA-NG-SV:22}
V.~Apidopoulos, N.~Ginatta, and S.~Villa, ``Convergence rates for the heavy-ball continuous dynamics for non-convex optimization, under {P}olyak--{{\L}}ojasiewicz condition,'' {\em Journal of Global Optimization}, vol.~84, no.~3, pp.~563--589, 2022.

\end{thebibliography}
\end{document}